\theoremstyle{plain}
\numberwithin{equation}{section}
\newtheorem{teo}{Theorem}
\newtheorem{cor}{Corollary}
\newtheorem{lemma}{Lemma}
\newtheorem{cora}[lemma]{Corollary}
\theoremstyle{definition}
\newtheorem*{notation}{Notation}
\newtheorem{remark}{Remark}
\newcommand{\Type}{\operatorname{Type}}
\theoremstyle{definition}
\begin{document}	

\title{ Symmetric groups and conjugacy classes}

\author{Edith Adan-Bante and  Helena Verrill}

\address{University of Southern Mississippi Gulf Coast, 730 East Beach Boulevard,
 Long Beach MS 39560} 
\email{EdithAdan@illinoisalumni.org}

\address{Department of Mathematics, Louisiana State University, Baton Rouge, LA 70803-4918}
\email{verrill@math.lsu.edu}
\keywords{Symmetric groups, products, conjugacy classes}

\subjclass{20b30}

\begin{abstract} Let $S_n$ be the symmetric group on $n$-letters. Fix $n>5$.
Given any nontrivial 
$\alpha,\beta\in S_n$, we prove that the product $\alpha^{S_n}\beta^{S_n}$
of the conjugacy classes $\alpha^{S_n}$ and $\beta^{S_n}$ is never a conjugacy 
class. Furthermore, if $n$ is not even and $n$ is not a multiple of three, then 
$\alpha^{S_n}\beta^{S_n}$ is the union of at least three distinct conjugacy 
classes. We also describe the elements $\alpha,\beta\in S_n$ in the case when 
$\alpha^{S_n}\beta^{S_n}$ is the union of exactly two distinct conjugacy 
classes.
\end{abstract}
\maketitle

\begin{section}{Introduction}

Let $G$ be a finite group, $a\in G$  and $a^G=\{a^g\mid g\in G\}$ 
be the conjugacy class of $a$ in $G$.
Let $X$ be a $G$-invariant subset of $G$, i.e. 
$X^g=\{x^g\mid x\in X\}=X$ for all $g\in G$.
  Then $X$ can be expressed as a union of 
  $n$ distinct conjugacy classes of $G$, for some integer $n>0$. Set
 $\eta(X)=n$.
 
For any
$a,b\in G$, the product
 $a^G b^G=\{xy\mid x\in a^G, y\in b^G\}$
is a $G$-invariant set.
In this note we explore $\eta(a^G b^G)$ when
$G$ is the symmetric group $S_n$ on $n$-letters, and $a^G$,
$b^G$ are conjugacy classes of $S_n$.  We denote the
identity of any group by $e$.

Arad and Herzog
conjectured that the product of two nontrivial 
conjugacy classes in a finite simple
nonabelian  group is never a conjugacy class \cite{arad},
i.e., if $a,b\not=e$, then  $\eta(a^Gb^G)\not=1$.
This has been proved in some cases \cite{arad}, 
in particular, it has been proved for the alternating group $A_n$,  i.e.
if $n\ge 5$ and $\alpha,\beta\in A_n\setminus\{e\}$, then
$\eta(\alpha^{A_n}\beta^{A_n})\ge 2$.

In this note, we show that
the symmetric group behaves similarly, and we give an explicit description of
the minimum possible value of $\eta$.
 \begin{teo}
 Let $S_n$ be the symmetric group on $n$-letters,
$n>5$, and $\alpha,\beta\in S_n
\setminus\{e\}$. 
Then $\eta(\alpha^{S_n}\beta^{S_n})\ge 2$, and
 if $\eta(\alpha^{S_n}\beta^{S_n})=2$ then either
$\alpha$ or $\beta$ is a fixed point free permutation. Assume that
 $\alpha$ is fixed point free. Then 
  one of the following
 holds
 
 i) $n$ is even,  $\alpha$  is the product of $\frac{n}{2}$ disjoint
 transpositions and
 $\beta$ is either a transposition or a $3$-cycle.

 ii) $n$ is a multiple of $3$,  $\alpha$ is the  product
 of $\frac{n}{3}$ disjoint 3-cycles and
 $\beta$ is a transposition.
 \end{teo}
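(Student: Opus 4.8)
My plan is to treat $P:=\alpha^{S_n}\beta^{S_n}$ as a union of $S_n$-classes and to count the distinct \emph{cycle types} occurring in it, using the number of fixed points as the primary coarse invariant: two permutations with different numbers of fixed points (and more generally different cycle types) are never $S_n$-conjugate, so it suffices to exhibit enough cycle types among products $\alpha'\beta'$ with $\alpha'\in\alpha^{S_n}$ and $\beta'\in\beta^{S_n}$. All the freedom comes from conjugation: replacing $(\alpha',\beta')$ by $(\alpha'^{g},\beta'^{g})$ leaves the cycle type of the product unchanged, so after fixing $\alpha'=\alpha$ I may place $\operatorname{supp}(\beta')$ arbitrarily relative to $\operatorname{supp}(\alpha)$. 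The basic local tool I will use repeatedly is the standard fact that multiplying a permutation by a transposition $(x\,y)$ merges the two cycles through $x$ and $y$ when they are distinct and splits the common cycle otherwise; writing $\beta'$ as a product of transpositions then lets me compute the cycle type of $\alpha'\beta'$ and, more importantly, change it in a controlled way by moving one point of $\operatorname{supp}(\beta')$ on or off a cycle of $\alpha$.

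First, if $\alpha$ and $\beta$ have the same cycle type then $\beta$ is conjugate to $\alpha^{-1}$, so $e=\alpha'\beta'$ is achievable and $P$, being nontrivial, already meets at least two classes. In general I will prove the sharper statement that if \emph{both} $\alpha$ and $\beta$ have a fixed point then $\eta(P)\ge 3$. This settles $\eta(P)\ge 2$ in that case and supplies the contrapositive needed for the classification, namely that $\eta(P)=2$ forces one of $\alpha,\beta$ to be fixed point free. The idea is that when $\operatorname{supp}(\alpha)$ and $\operatorname{supp}(\beta)$ each omit a point, I can slide the two supports through several relative positions --- disjoint (when room permits), meeting in one point, meeting in two points --- and the merge/split tool shows that passing from one position to the next changes the number of fixed points of the product, producing at least three distinct fixed-point counts.

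For the classification proper, assume $\alpha$ is fixed point free and set $t=|\operatorname{supp}(\beta)|$. The key reduction is that, since $\alpha'$ has no fixed point, a point $p$ satisfies $\alpha'\beta'(p)=p$ only if $\beta'(p)=\alpha'^{-1}(p)\neq p$, so every fixed point of every product lies in $\operatorname{supp}(\beta')$; hence each product has at most $t$ fixed points. I then show that $\alpha$ must be uniform with all cycles of length $2$ or all of length $3$: if $\alpha$ had two cycles of different lengths, or a single cycle of length $\ge 4$, then splitting and merging its cycles with the transpositions building even one nontrivial $\beta'$ already yields three distinct cycle types. A parallel analysis bounds $\beta$. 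Once $\alpha$ is a fixed point free involution one checks that a product can fix at most one point when $\beta$ is a $3$-cycle and at most two when $\beta$ is a transposition, that in each surviving configuration the cycle type is determined, and that any $\beta$ larger than a transposition or a $3$-cycle (in particular a fixed point free $\beta$) produces a third type; this gives case (i). Once $\alpha$ is a product of $3$-cycles, a transposition $\beta$ leaves exactly two types --- the merge of two $3$-cycles into a $6$-cycle, and the collapse of one $3$-cycle into a transposition together with a fixed point --- whereas any larger $\beta$ creates a third, giving case (ii). In each surviving case I exhibit the two cycle types explicitly to confirm $\eta(P)=2$.

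The main obstacle is the casework in the regime where the supports cannot be made disjoint, i.e. $|\operatorname{supp}(\alpha)|+|\operatorname{supp}(\beta)|>n$, since there the ``slide the overlap'' argument must be run with a forced minimum overlap and I must verify that enough distinct cycle types still appear; the smallest cases $n=6,7$ are the tightest and may have to be checked by hand. A second delicate point is the asymmetry in case (ii): I must explain cleanly why, with $\alpha$ a product of $3$-cycles, a transposition $\beta$ produces exactly two types while a $3$-cycle $\beta$ produces three, which is precisely where the ``not a multiple of three'' hypothesis of the sharper statement is felt. Once the merge/split bookkeeping is packaged as a single lemma, the remaining work is to enumerate the few small-support configurations and confirm the counts.
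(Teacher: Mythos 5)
Your overall strategy (distinguish conjugacy classes by cycle type, with the number of fixed points as the working invariant, plus merge/split bookkeeping for the endgame) is the same in spirit as the paper's, and your classification phase is essentially the paper's case analysis. The genuine problem is the central lemma of your plan: that if both $\alpha$ and $\beta$ have a fixed point then $\eta(\alpha^{S_n}\beta^{S_n})\ge 3$. Your sliding argument needs three relative positions of the supports $A=\operatorname{supp}(\alpha)$, $B=\operatorname{supp}(\beta)$: disjoint, overlapping in one point, overlapping in two points. These give fixed-point counts $n-|A|-|B|$, exactly $n-|A|-|B|+1$, and at least $n-|A|-|B|+2$, so the argument is correct \emph{precisely when} $|A|+|B|\le n$. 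When $|A|+|B|>n$ the disjoint position does not exist, and, worse, once the forced overlap is at least two the number of fixed points of the product is no longer a function of the overlap size: points inside $A\cup B$ may or may not be fixed, so moving the overlap by one does not provably change the count. You acknowledge this regime but misdiagnose it as a finite check for $n=6,7$. It is not: for \emph{every} $n$, two $(n-1)$-cycles (each with one fixed point, supports forced to overlap in at least $n-2$ points) lie in this regime, and the theorem requires $\eta\ge 3$ for them. An infinite family of cases, including some of the hardest ones, is left with no argument.

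The paper closes exactly this gap with an idea your proposal lacks: induction on $n$ through an embedding lemma. If both $\alpha$ and $\beta$ fix a point, regard them (after conjugating) as elements of $S_{n-1}$; Lemma~\ref{fixedpointfree} shows that passing from $S_{n-1}$ to $S_n$ strictly increases $\eta$ when the relevant element is fixed point free in $S_{n-1}$, and combined with the inductive hypothesis (Lemma~\ref{lem:fixedpointfree} for $n-1$, with base case $n=5$ settled via the $S_4$ classification and explicit products) this forces $\eta(\alpha^{S_{n-1}}\beta^{S_{n-1}})=1$, a contradiction. It is also worth noting that the paper's fixed-point-free construction, Lemma~\ref{lem:alpha_cong_nowhere=beta}, is engineered to work under the hypothesis $r+s\le n$ on the numbers of fixed points, i.e.\ exactly in the regime $|A|+|B|\ge n$ where your support-sliding breaks down; some such device, or the inductive reduction, is indispensable. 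Until you supply an argument covering $|A|+|B|>n$, the claim $\eta\ge 2$ in general, the implication ``$\eta=2$ forces one of $\alpha,\beta$ fixed point free,'' and hence the whole classification, remain unproven.
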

 
 Since for any group $G$ and any $a,b\in G$, we have
 $a^G b^G= b^G a^G$ (see Lemma \ref{interchange}), 
Theorem A describes $\alpha$ and $\beta$
 when $\eta(\alpha^{S_n}\beta^{S_n})=2$.
 \begin{cor} Fix $n>5$.
 Let $m$  be the least integer in 
 $\{\eta(\alpha^{S_n}\beta^{S_n})\mid \alpha,\beta \in S_n\setminus\{e\}\}$. Then
\begin{enumerate} 
\item[ i)]  $m=2$ if $n$ is divisible by $2$ or $3$,
 \item[ ii)] $m=3$ otherwise.
\end{enumerate}
  \end{cor}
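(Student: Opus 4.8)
The plan is to read off the lower bounds from Theorem~A and supply two explicit constructions for the upper bounds. Throughout I would exploit the elementary reduction that every element of $\alpha^{S_n}\beta^{S_n}$ is conjugate to $\alpha\tau$ for some $\tau\in\beta^{S_n}$: indeed, conjugating $g\alpha g^{-1}\,h\beta h^{-1}$ by $g^{-1}$ yields $\alpha\cdot(g^{-1}h)\beta(g^{-1}h)^{-1}$, whose second factor lies in $\beta^{S_n}$. Hence the conjugacy classes occurring in the product are precisely the cycle types realized by $\alpha\tau$ as $\tau$ ranges over $\beta^{S_n}$ with $\alpha$ held \emph{fixed}, which turns each evaluation of $\eta$ into a short case analysis on how one transposition meets the cycle structure of $\alpha$.

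For part~(i) I would produce a pair attaining $\eta=2$. If $n$ is even, take $\alpha$ a fixed point free involution and $\beta=(i\,j)$ a transposition. Since $\alpha$ is fixed point free, $i$ and $j$ either lie in the same $2$-cycle of $\alpha$, whence $\alpha\beta$ has type $2^{n/2-1}1^2$, or in distinct $2$-cycles, which fuse into a single $4$-cycle giving type $4\cdot 2^{n/2-2}$. If instead $3\mid n$, take $\alpha$ a product of $n/3$ disjoint $3$-cycles and $\beta$ a transposition: now $i,j$ lie in one $3$-cycle (yielding type $2\cdot 1\cdot 3^{n/3-1}$) or in two distinct ones (fusing into a $6$-cycle, type $6\cdot 3^{n/3-2}$). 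In both situations exactly two cycle types arise, so $\eta=2$; combined with $\eta\ge 2$ from Theorem~A this gives $m=2$.

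For part~(ii), Theorem~A supplies the lower bound: if $n$ is divisible by neither $2$ nor $3$, then no nontrivial pair can satisfy condition~(i) or~(ii) of Theorem~A, so $\eta(\alpha^{S_n}\beta^{S_n})\neq 2$ always; with $\eta\ge 2$ this forces $m\ge 3$. For the matching upper bound, valid for every $n>5$, I would take $\alpha=\beta$ a single transposition. The product of two transpositions is the identity, a $3$-cycle, or a product of two disjoint transpositions according as the factors coincide, share one point, or are disjoint, so exactly the three cycle types $1^{n}$, $3\cdot 1^{n-3}$, $2^{2}1^{n-4}$ occur. Thus $\eta=3$ here, giving $m\le 3$ and hence $m=3$.

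The only real obstacle is bookkeeping. One must verify that each case list is genuinely exhaustive---here the choice of a fixed point free $\alpha$ is what guarantees $i$ and $j$ always sit inside cycles rather than at fixed points---and that each claimed cycle type is actually attained when $n>5$; the ``two distinct cycles'' case needs $\alpha$ to have at least two cycles of the relevant length, which holds since $n\ge 6$ in each case. No input beyond Theorem~A and these direct multiplications is required.
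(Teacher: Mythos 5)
Your proposal is correct and follows essentially the same route as the paper: the lower bounds ($\eta\ge 2$ always, and $\eta\ne 2$ when $\gcd(n,6)=1$) are read off from Theorem~A, and the upper bounds come from explicit pairs---your transposition-squared computation is exactly the paper's Lemma~\ref{producttransposition}, and your verification that the fixed-point-free involution (resp.\ product of $3$-cycles) times a transposition yields exactly two cycle types is precisely the ``easy to check'' step at the end of the paper's proof of Theorem~A. The only difference is that you write out these routine calculations in full rather than citing them, which is fine.
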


\begin{remark}
For $n=5$ we have
$(1\ 2)^{S_5}
(1\ 2\ 3\ 4\ 5)^{S_5}=(1\ 2\ 3\ 4)^{S_5}\cup
((1\ 2\ 3)(4\ 5))^{S_5}$.  This describes,
up to conjugation and ordering,
the only pair of elements $\alpha,\beta \in S_5$ with $\eta(\alpha^{S_5}\beta^{S_5})=2$.
\end{remark}

 As for the maximum possible value of 
$\eta(a^Gb^G)$,
John Thompson conjectured that given any finite nonabelian simple group $G$, there exists
a conjugacy class $C$ such that $C^2=G$ [see \cite{kappe}]. The conjecture has been 
proved for the alternating group $A_n$ with $n\geq 5$
[see \cite{hsii}]. Since given any $\alpha,\beta\in S_n$,
either $\alpha^{S_n} \beta^{S_n}\subseteq A_n$ or $\alpha^{S_n} \beta^{S_n}\subseteq S_n\setminus A_n$, it follows then that there exists a conjugacy class $C$ in $S_n$
such that $\eta(C^2)$ is the number of conjugacy classes of $S_n$ in $A_n$ and that
is the largest possible value for $\eta(\alpha^{S_n} \beta^{S_n})$. 
See \cite{homogen}, \cite{edith2}, \cite{derived}, \cite{dade} for 
examples of recent developments
in products of conjugacy classes. The products of conjugacy classes of symmetric groups
have been studied extensively, for instance  in
\cite{bedard}, \cite{farahathigman} \cite{goupil1}, \cite{goupil2} and \cite{jackson}.

The results of this paper were discovered by experimentation, using the
computer algebra package  MAGMA \cite{magma}.

{\bf Acknowledgment.} The first author would like to thank FEMA for providing her
 with temporary housing in the  aftermath of hurricane Katrina. 

The second author is partially supported by NSF grant  DMS-0501318
and Louisiana Board of Regents grant LEQSF-(2004-7) RD-A-16.

\end{section}

\begin{section}{Notation}
Our notation makes use of the following 
very well known result. 
\begin{lemma}
\label{lemma1}
Let $\alpha \in S_n$. Then

a) $\alpha$ can be written as a product of disjoint cycles.

b) $\alpha^{S_n}$ is the set of all permutations of $S_n$ with the same
cycle structure as $\alpha$.
\end{lemma}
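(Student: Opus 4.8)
The plan is to prove (a) via the orbit decomposition of the cyclic group $\langle \alpha\rangle$ acting on $\Omega = \{1, 2, \ldots, n\}$, and then to prove (b) by establishing the explicit conjugation formula for cycles and applying it in both directions. For part (a), I would consider for each point $i \in \Omega$ its orbit $\{i, \alpha(i), \alpha^2(i), \ldots\}$ under $\langle\alpha\rangle$. Since $\Omega$ is finite and $\alpha$ is a bijection, the sequence $i, \alpha(i), \alpha^2(i), \ldots$ must return to $i$; letting $k$ be the least positive integer with $\alpha^k(i) = i$, the orbit is exactly $\{i, \alpha(i), \ldots, \alpha^{k-1}(i)\}$ and $\alpha$ restricted to this orbit acts as the $k$-cycle $(i\ \alpha(i)\ \cdots\ \alpha^{k-1}(i))$. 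Distinct orbits are disjoint and partition $\Omega$, so the associated cycles are disjoint and their product agrees with $\alpha$ at every point; hence $\alpha$ is this product of disjoint cycles.

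For part (b), the heart of the matter is the conjugation formula
\[
\sigma\, (a_1\ a_2\ \cdots\ a_k)\, \sigma^{-1} = (\sigma(a_1)\ \sigma(a_2)\ \cdots\ \sigma(a_k)),
\]
valid for every $\sigma \in S_n$ and every cycle $(a_1\ a_2\ \cdots\ a_k)$, which I would verify by evaluating both sides on an arbitrary point. Applying it to each cycle in the decomposition from (a) shows that $\sigma\alpha\sigma^{-1}$ has exactly as many cycles of each length as $\alpha$, so every conjugate of $\alpha$ shares its cycle structure. For the reverse inclusion, given $\beta$ with the same cycle structure as $\alpha$, I would write both in disjoint cycle form with cycles of equal length paired off, and define $\sigma$ to carry each entry of a cycle of $\alpha$ to the correspondingly positioned entry of the paired cycle of $\beta$; the conjugation formula then yields $\sigma\alpha\sigma^{-1} = \beta$, so $\beta \in \alpha^{S_n}$.

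The one point requiring genuine care --- and the step I expect to be the main obstacle --- is the bookkeeping in the reverse direction of (b): one must check that pairing cycles of equal length produces a \emph{well-defined} bijection $\sigma$ of all of $\Omega$, with no point left unassigned or assigned twice. This rests on the fact that $\alpha$ and $\beta$, lying in the same $S_n$ and having identical cycle-length multisets (fixed points counting as $1$-cycles), have cycle supports whose union is all of $\Omega$ in both cases, so the correspondence is indeed a bijection. The conjugation formula itself, though routine, should be checked carefully since the entire argument for (b) depends on it.
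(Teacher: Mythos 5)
Your proof is correct and complete: the orbit decomposition of $\langle\alpha\rangle$ acting on $\{1,\dots,n\}$ for part (a), and the conjugation formula $\sigma(a_1\ \cdots\ a_k)\sigma^{-1}=(\sigma(a_1)\ \cdots\ \sigma(a_k))$ applied in both directions for part (b), is the standard argument, and you correctly flag the one subtle point (that pairing equal-length cycles, with fixed points counted as $1$-cycles, yields a well-defined bijection $\sigma$). For comparison, the paper offers no proof at all, stating this lemma only as a ``very well known result,'' so your write-up simply supplies the textbook argument the authors took for granted.
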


\begin{cora}
If $\alpha,\beta\in S_n$
\begin{enumerate}
\item[a)]
have different cycle structures, then they are not conjugate.
\item[b)]
have different numbers of fixed points,
then they are not conjugate.
\end{enumerate}
\end{cora}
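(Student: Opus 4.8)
The plan is to derive both parts directly from Lemma~\ref{lemma1}(b), which identifies the conjugacy class $\alpha^{S_n}$ with exactly the set of permutations having the same cycle structure as $\alpha$. The whole argument reduces to unwinding this characterization, so no serious calculation is required.

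For part a), I would argue by contraposition of membership in the class. Suppose $\alpha$ and $\beta$ have different cycle structures. By Lemma~\ref{lemma1}(b), every element of $\alpha^{S_n}$ shares the cycle structure of $\alpha$; since $\beta$ does not, we conclude $\beta\notin\alpha^{S_n}$. This means there is no $g\in S_n$ with $\beta=\alpha^g$, which is precisely the statement that $\alpha$ and $\beta$ are not conjugate.

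For part b), the key observation is that the fixed points of a permutation are exactly its cycles of length one in the disjoint-cycle decomposition guaranteed by Lemma~\ref{lemma1}(a). Consequently the number of fixed points is part of the data recorded by the cycle structure: two permutations with the same cycle structure necessarily have the same number of $1$-cycles, hence the same number of fixed points. Thus if $\alpha$ and $\beta$ have different numbers of fixed points, their cycle structures must differ, and part a) immediately yields that they are not conjugate.

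There is no real obstacle here; the only point deserving explicit mention is the identification of fixed points with $1$-cycles, so that the hypothesis of part b) is recognized as a special case of the hypothesis of part a). Both parts are thus immediate consequences of the well-known description of conjugacy classes in $S_n$.
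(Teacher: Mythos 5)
Your proof is correct and follows exactly the route the paper intends: the corollary is stated as an immediate consequence of Lemma~\ref{lemma1}, and your argument simply makes that implication explicit, including the one point worth spelling out, namely that fixed points are the $1$-cycles of the disjoint-cycle decomposition, so part b) reduces to part a).
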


\begin{notation}
We follow standard conventions, but repeat this
for clarity.
\begin{itemize}
\item
For a positive integer $n$,
$S_n$ denotes the symmetric group on $n$ objects, which we
identify with the set of bijective endomorphisms of the set
$\{1,\dots,n\}$.
\item
For a group $G$ and
 $a,b\in G$, we write $a\sim b$ if for some $g\in G$ we have
$a^g=b$, where $a^g=g^{-1}ag$.
\item
For $\sigma\in S_n$, the cycle structure of $\sigma$ is
the multiset of the lengths of all the disjoint cycles comprising $\sigma$.
We denote the cycle structure of $\sigma$ by $\Type(\sigma)$.
For example,
$\Type((1\ 3)(2\ 5\ 6)(4)(7) )=\{1,1,2,3\}$.  By Lemma~\ref{lemma1},
$\Type(\sigma)$ is well defined, and 
$\sigma\sim\tau\iff
\Type(\sigma)=\Type(\sigma)$.
\end{itemize}
\end{notation}

\end{section}
\begin{section}{Proofs}

\begin{lemma}
\label{interchange}
Let $G$ be a finite group, and let $a^G$ and $b^G$ be conjugacy classes of $G$. Then $a^G b^G=b^G a^G$.
In particular, $a^G b^G=(ab)^G$ if and only if $b^G a^G=(ba)^G$.
\end{lemma}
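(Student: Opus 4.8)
The plan is to prove the two statements in turn, each via a short computation with the conjugation convention $a^g=g^{-1}ag$. For the set equality $a^G b^G = b^G a^G$ I would argue by inclusion, and a single inclusion plus a symmetry argument suffices. Take a typical element $xy\in a^G b^G$, so that $x\sim a$ and $y\sim b$. The key manipulation is to regroup
\[
xy = y\,(y^{-1}xy) = y\,x^y .
\]
Since $y\in b^G$, while $x^y\sim x\sim a$ forces $x^y\in a^G$, this exhibits $xy$ as an element of $b^G a^G$. Hence $a^G b^G\subseteq b^G a^G$, and interchanging the roles of $a$ and $b$ gives the reverse inclusion, so the two products coincide.

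For the second assertion I would first record the elementary but crucial observation that $ab$ and $ba$ are always conjugate: from $ba = a^{-1}(ab)a = (ab)^a$ we get $ba\sim ab$, and therefore $(ab)^G=(ba)^G$ as conjugacy classes. Combining this with the first part yields the chain of equivalences
\[
a^G b^G=(ab)^G \iff b^G a^G=(ab)^G \iff b^G a^G=(ba)^G,
\]
where the first step replaces $a^G b^G$ by the equal set $b^G a^G$ and the second replaces $(ab)^G$ by the equal class $(ba)^G$. This is precisely the claimed biconditional.

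There is no substantial obstacle here; the statement is essentially formal, and the only point demanding care is bookkeeping with the convention $a^g=g^{-1}ag$, so that both the regrouping $xy=y\,x^y$ and the identity $ba=(ab)^a$ come out on the correct side. No deeper group-theoretic input (such as Lemma~\ref{lemma1} on cycle types) is needed, and indeed the whole argument is valid in an arbitrary group, consistent with the generality of the statement.
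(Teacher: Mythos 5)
Your proof is correct and takes essentially the same approach as the paper: one inclusion via the conjugation regrouping $xy = y\,x^y$ (the paper uses the mirror-image identity $ba^g=(a^g)^{b^{-1}}b$), followed by symmetry for the reverse inclusion. Your explicit verification that $ba=(ab)^a$, hence $(ab)^G=(ba)^G$, fills in the ``in particular'' clause that the paper leaves implicit, but this is added detail rather than a different argument.
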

\begin{proof}
Observe that $ba^g= (a^g)^{b^{-1}}b$, so $b^G a^G\subseteq  a^G b^G$.
Similarly,
 $a^G b^G\subseteq b^G a^G$.
\end{proof}

\begin{lemma}\label{bypieces}
Let $\alpha,\beta \in S_r$ for $r<n$.  We may consider 
$\alpha$ and $\beta$ as elements of $S_n$ by defining their action to be
trivial on $r+1,\dots,n$.
Suppose that $\alpha_1,\beta_1\in S_n$ fix $1,\dots,r$.
Then
$\eta(\alpha^{S_r}\beta^{S_r})\leq 
\eta((\alpha\alpha_1)^{S_n}(\beta\beta_1)^{S_n})$.
\end{lemma}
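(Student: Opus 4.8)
The plan is to construct an injection from the set of $S_r$-conjugacy classes occurring in $\alpha^{S_r}\beta^{S_r}$ into the set of $S_n$-conjugacy classes occurring in $(\alpha\alpha_1)^{S_n}(\beta\beta_1)^{S_n}$; this injection immediately yields the asserted inequality between the two values of $\eta$. The guiding idea is that, since $\alpha,\beta$ are supported on $\{1,\dots,r\}$ while $\alpha_1,\beta_1$ are supported on $\{r+1,\dots,n\}$, the two ``halves'' of each element do not interact. In particular, conjugating $\alpha\alpha_1$ and $\beta\beta_1$ only by elements of the subgroup $S_r\le S_n$ moves the first half while leaving the second half fixed.

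First I would record the commutation relations forced by the disjoint supports: $\alpha$ commutes with $\alpha_1$, $\beta$ commutes with $\beta_1$, and every $u,v\in S_r$ (viewed in $S_n$ as fixing $r+1,\dots,n$) commutes with both $\alpha_1$ and $\beta_1$. Writing $\gamma=\alpha_1\beta_1$, a short computation then gives, for all $u,v\in S_r$,
\[
(\alpha\alpha_1)^u(\beta\beta_1)^v=(\alpha^u\alpha_1)(\beta^v\beta_1)=\alpha^u\beta^v\alpha_1\beta_1=(\alpha^u\beta^v)\gamma .
\]
Letting $u$ and $v$ range over $S_r$ shows that $(\alpha^{S_r}\beta^{S_r})\gamma\subseteq(\alpha\alpha_1)^{S_n}(\beta\beta_1)^{S_n}$; that is, every element of $\alpha^{S_r}\beta^{S_r}$, multiplied on the right by the fixed permutation $\gamma$, lands inside the $S_n$-product.

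The crux of the argument is that the induced map on conjugacy classes is both well defined and injective, and here I would lean on the cycle-type description of conjugacy (Lemma~\ref{lemma1}). For $\sigma\in\alpha^{S_r}\beta^{S_r}$, since $\sigma$ is supported on $\{1,\dots,r\}$ and $\gamma$ on $\{r+1,\dots,n\}$, the disjoint-cycle decomposition of $\sigma\gamma$ in $S_n$ is simply the union of those of $\sigma$ and of $\gamma$. Hence $\Type(\sigma\gamma)=\Type(\sigma)\cup T$ as multisets, where $T$ is the fixed multiset recording how $\gamma$ acts on $\{r+1,\dots,n\}$ and $\Type(\sigma)$ is the cycle structure of $\sigma$ read inside $S_r$. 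Because adjoining the fixed multiset $T$ can be cancelled, two elements $\sigma,\tau$ satisfy $\sigma\gamma\sim\tau\gamma$ in $S_n$ exactly when $\Type(\sigma)=\Type(\tau)$, i.e. exactly when $\sigma\sim\tau$ in $S_r$. Thus $\sigma\mapsto(\sigma\gamma)^{S_n}$ descends to a well-defined, injective map on $S_r$-classes, whose image consists of $S_n$-classes contained in $(\alpha\alpha_1)^{S_n}(\beta\beta_1)^{S_n}$, and counting classes gives the result.

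The one point demanding care, and the main potential obstacle, is keeping the cycle type of $\sigma$ computed consistently in $S_r$ versus $S_n$: as an element of $S_r$ the permutation $\sigma$ carries its fixed points in $\{1,\dots,r\}$ as $1$-cycles, and these must be tracked so that the multiset $\Type(\sigma)$ genuinely distinguishes distinct $S_r$-classes before $T$ is appended. Once this bookkeeping of fixed points is handled correctly, the proof is a direct combination of the disjoint-support commutation relations and the conjugacy criterion of Lemma~\ref{lemma1}.
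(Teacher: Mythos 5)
Your proof is correct and takes essentially the same route as the paper's: both arguments rest on the disjoint-support identity $(\alpha\alpha_1)^{u}(\beta\beta_1)=(\alpha^{u}\beta)(\alpha_1\beta_1)$ together with the fact that $\Type\bigl((\alpha^{u}\beta)(\alpha_1\beta_1)\bigr)=\Type(\alpha^{u}\beta)\cup\Type(\alpha_1\beta_1)$, so that appending the fixed multiset $\Type(\alpha_1\beta_1)$ keeps distinct $S_r$-classes non-conjugate in $S_n$. The paper simply picks representatives $\alpha^{\sigma_i}\beta$ of the $k$ distinct classes instead of writing out your explicit injection on classes, but the content is identical.
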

\begin{proof}
If $\eta(\alpha^{S_r}\beta^{S_r})=k$, by Lemma \ref{lemma1} we have that there exist
$\sigma_1, \ldots, \sigma_k\in S_r$ with
$\Type(\alpha^{\sigma_i}\beta)\not=
\Type(\alpha^{\sigma_j}\beta)$ for $i\not=j$.
Since 
$\Type(\alpha^{\sigma_i}\alpha_1\beta\beta_1)
=\Type(\alpha^{\sigma_i}\beta)\cup\Type(\alpha_1\beta_1)$,
$\alpha^{\sigma_i}\alpha_1\beta\beta_1$ are pairwise non-conjugate,
and the result follows.
\end{proof}

\begin{lemma}
\label{lem:alpha_cong_nowhere=beta}
For $\alpha,\beta\in S_n$, 
$n\ge 5$,
if $\alpha$ has $r$ fixed points,
and $\beta$ has $s$ fixed points, then provided $r+s\le n$,
there exists $\sigma\in S_n$
with
 $\alpha^\sigma(i)\not=\beta(i)$ for
$i=1,\dots,n$.
\end{lemma}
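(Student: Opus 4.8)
The plan is to recast the statement as a purely combinatorial construction. Writing $\gamma=\alpha^\sigma$, note that as $\sigma$ ranges over $S_n$ the permutation $\gamma$ ranges over \emph{all} permutations with $\Type(\gamma)=\Type(\alpha)$ (Lemma~\ref{lemma1}), and these cycles may be laid out on the points $\{1,\dots,n\}$ completely freely. Moreover the condition $\alpha^\sigma(i)=\gamma(i)\neq\beta(i)$ for all $i$ says exactly that, for each $i$, the value $\gamma(i)$ must avoid the \emph{single} forbidden target $\beta(i)$. So it suffices to build from scratch a permutation $\gamma$ with the same cycle type as $\alpha$ --- say with $r$ fixed points and all other parts of length $\ge 2$ --- such that no point is mapped by $\gamma$ to its forbidden target. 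The role of the hypothesis $r+s\le n$ will be to guarantee enough room to place the fixed points of $\gamma$.

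First I would place the $r$ fixed points of $\gamma$. Since $\gamma(i)=i$ forces $i\neq\beta(i)$, these must be chosen among the $n-s$ points not fixed by $\beta$; as $r+s\le n$ there are at least $r$ such points, so this is possible (and one checks this is exactly where the hypothesis is needed, since by pigeonhole it fails when $r+s>n$). Let $A$ be the set of $n-r$ remaining points, to be partitioned into the cycles of length $\ge 2$. The key observation is that the avoidance constraints on $A$ are now very sparse: a point of $A$ fixed by $\beta$ imposes no constraint (its forbidden target is itself, and it already lies in a cycle of length $\ge 2$), and a point whose $\beta$-image was used up as a fixed point of $\gamma$ also imposes none (its forbidden target no longer lies in $A$, whereas $\gamma$ keeps it inside $A$). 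What remains to be avoided on $A$ is only the partial fixed-point-free injection $\beta|_A$.

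It then remains to arrange $A$ into cycles of the prescribed lengths (all $\ge 2$) so as to avoid $\beta|_A$. Because every point still forbids at most one successor, the directed graph of allowed moves on $A$ is dense (each out-degree is $\ge|A|-2$), and there is a great deal of freedom. One clean way to finish is to produce a single cyclic ordering of $A$ in which consecutive points never violate $\beta$ --- a Hamiltonian cycle in the allowed digraph, which exists once $|A|$ is not too small --- and then cut it into consecutive arcs of the required lengths, closing each arc into a cycle of $\gamma$; here one must choose the cut points so that the finitely many new closing edges also avoid $\beta$, which the abundance of admissible rotations permits. Alternatively one can build the cycles greedily one at a time, at each step choosing the next point away from the unique forbidden value.

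The main obstacle is the endgame, when only a few points of $A$ remain and the freedom to choose successors or cut points has been spent --- for instance the degenerate case in which the last cycle is a transposition forced onto a pair $\{w,w'\}$ with $\beta(w)=w'$ and $\beta(w')=w$. These difficulties are controlled precisely by the slack in the hypotheses: the freedom in choosing \emph{which} of the $n-s$ points become the fixed points of $\gamma$ (equivalently, which points make up $A$), together with $n\ge 5$, lets one steer away from the few bad small configurations, with the genuinely small remaining cases checked directly.
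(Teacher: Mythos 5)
Your opening reduction is sound, and it is genuinely different from the paper's approach: you replace $\alpha^\sigma$ by a freely constructed $\gamma$ with $\Type(\gamma)=\Type(\alpha)$ (valid by Lemma~\ref{lemma1}), place the $r$ fixed points of $\gamma$ on points not fixed by $\beta$ (exactly where $r+s\le n$ enters), and correctly observe that the residual constraints on the set $A$ of remaining points form a fixed-point-free partial injection $f=\beta|_A$. (The paper instead fixes positions $1,\dots,n$ one at a time, conjugating by transpositions $(k\ t)$ and re-verifying the finitely many disturbed positions; your reformulation is cleaner at this stage.) The problem is that the whole difficulty of the lemma has now been concentrated in the step you leave as a sketch --- arranging $A$ into cycles of prescribed lengths $\ge 2$ that avoid $f$ --- and the specific mechanism you propose for it demonstrably breaks.

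Concretely, the Hamiltonian-cycle-plus-rotation argument fails in configurations that are neither small nor rare. Suppose all prescribed lengths equal $2$ (so $\alpha$ is a fixed-point-free involution, $r=0$, and permuting the arc lengths buys nothing), and suppose your chosen Hamiltonian cycle $a_1\to a_2\to\cdots\to a_m\to a_1$ happens to satisfy $f(a_2)=a_1$ and $f(a_3)=a_2$ (realizable, e.g., with $n=m=6$ and $\beta$ containing the $3$-cycle $(a_1\ a_3\ a_2)$; the cycle itself still avoids $f$). For any rotation, the closing edges are precisely the backward steps $a_q\to a_{q-1}$ at all positions $q$ of one fixed parity, so every one of the $m$ rotations contains one of the two bad backward pairs, and no cut works --- even though a valid pairing of $A$ exists; one must instead choose a different Hamiltonian cycle, which your argument never shows is possible. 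Note also that in this situation $r=0$, so your fallback freedom of ``which points become fixed points of $\gamma$'' is vacuous. The greedy alternative and the final appeal to steering away from ``bad small configurations'' are likewise assertions rather than arguments: such configurations are exactly what makes the statement false for $n=4$ (Remark~\ref{rem:lemma5forn=4}), so a proof must identify them and show how the remaining freedom eliminates them, and even the existence of the initial Hamiltonian cycle avoiding $f$ needs justification. As it stands you have reduced the lemma to an unproven combinatorial claim whose careful verification would be comparable in length and delicacy to the paper's inductive proof.
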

\begin{proof}
For a positive integer $k\le n$, we  inductively define
$\sigma_k\in S_n$ such that
$\alpha^{\sigma_k}(i)\not=\beta(i)$ for
$1\le i\le k$.  
We take $\sigma_0=e$.
Suppose $\sigma_{k-1}$ has been defined for some $k> 0$.
We define $\sigma_{k}$  as follows:
\begin{enumerate}
\item[case 1:]
If
$\alpha^{\sigma_{k-1}}(k)\not=\beta(k)$
then set $\sigma_k=\sigma_{k-1}$.
\item[case 2:]
Suppose $\alpha^{\sigma_{k-1}}(k)=\beta(k)=k$.
Let $A$ be the subset of $\{1,\dots,n\}$ fixed by $\alpha^{\alpha_k}$,
and let $B$ be the subset fixed by $\beta$.
Since $|A|+|B|=r+s\le n$, and since $k\in A\cap B$, we must
have some $t\in\{1,\dots,n\}\setminus(A\cup B)$, and this $t$ satisfies
$\alpha^{\sigma_{k-1}}(t)\not=t$ and
$\beta(t)\not=t$.
Set
$\sigma_k=\sigma_{k-1}(k\ t)$.
This satisfies the required condition, since
$\alpha^{\sigma_k}(i)=\alpha^{\sigma_{k-1}}(i)$ unless
$i=t $ or $i=k$, and
$\alpha^{\sigma_k}(t)=k\not=\beta(t)$
(because $\beta(i)=k$ only if $i=k$, and $k\not=t$)
  and
$\alpha^{\sigma_k}(k)=\alpha^{\sigma_{k-1}}(t)\not=\beta(k)=k
$
(because $\alpha^{\sigma_{k-1}}(k)=k$, so 
$\alpha^{\sigma_{k-1}}(t)$ has some other value).
\item[case 3:]
$\alpha^{\sigma_{k-1}}(k)=\beta(k)\not=k$.
In this case, since $n\ge 5$, there is some $t\in\{1,\dots,n\}$ with 
$t\notin
S:=
\{\alpha^{\sigma_{k-1}}
\beta^{-1}(k),
\beta(\alpha^{\sigma_{k-1}})^{-1}(k),\alpha^{\sigma_{k-1}}(k),k\}$
(these are labeled in the top row of the picture
below).
Set $\sigma_k=\sigma_{k-1}(k\ t)$.
We have
$\alpha^{\sigma_k}(i)=\alpha^{\sigma_{k-1}}(i)$ unless
$i\in R:=\{k, t, u:=(\alpha^{\sigma_{k-1}})^{-1}(k),
v:=(\alpha^{\sigma_{k-1}})^{-1}(t)\}$. So, for $j\le k$ not in $R$,
 we have $\alpha^{\sigma_k}(j)\not=\beta(j)$.
 We  now show that for  $i\in R$ we also have
$\alpha^{\sigma_k}(i)\not=\beta(i)$.
For $i=k$, we have
$\alpha^{\sigma_k}(k)=\alpha^{\sigma_{k-1}}(t)\not=\beta(k)$,
since otherwise
 $\alpha^{\sigma_{k-1}}(t)=\beta(k)=\alpha^{\sigma_{k-1}}(k)$ implies
$k=t$, contradicting $t\notin S$.
For $i=t$, from the assumption $t\not=\alpha^{\sigma_{k-1}}(k)$ it follows that 
$\alpha^{\sigma_k}(t)=\alpha^{\sigma_{k-1}}(k)$.
We have $\alpha^{\sigma_{k-1}}(k)\not=\beta(t)$,
since otherwise $t=
\beta^{-1}\alpha^{\sigma_{k-1}}(k)=
\beta^{-1}\beta(k)=k$, contradicting $t\not=k$.
If $i=u$, then
$\alpha^{\sigma_k}(u)=t\not=\beta(u)$ because otherwise
$t=\beta(u)=\beta(\alpha^{\sigma_{k-1}})^{-1}(k)$, contradicting $t\notin S$.
For $i=v$, we have
$\alpha^{\sigma_k}(v)=k\not=\beta(v)$ because 
otherwise
$k=\beta(v)=
\beta(\alpha^{\sigma_{k-1}})^{-1}(t)$,
so $t=\alpha^{\sigma_{k-1}}\beta^{-1}(k)$, contradicting $t\notin S$.
\end{enumerate}
Proceeding in this
way, we eventually obtain $\sigma=\sigma_n$ as required.

$$
\setlength\unitlength{0.5cm}
\begin{picture}(22,2.8)(-1,0)
\put(12,1){\parbox{2in}{Picture corresponding
 to case 3 of Lemma~\ref{lem:alpha_cong_nowhere=beta}.
This shows a possible choice of $t$.\\
For simplicity we write $\alpha$ for $\alpha^{\sigma_{k-1}}$
and $\sigma$ for $(k\ t)$.
}}
\put(-1,0.3){$\beta$}
\put(-1,1.3){$\alpha$}
\put(-0.3,1.9){\vector(0,-1){0.6}}
\put(-0.3,0.1){\vector(0,1){0.6}}
\put(-0.08,2.2){\tiny}
\put(0,2.9){\tiny $\alpha\beta^{-1}$(k)}
\put(7.9,2.2){\tiny t}
\put(2.9,2.2){\tiny k}
\put(3.6,2.9){\tiny $\alpha$(k)}
\put(7.6,2.9){\tiny $\beta\alpha^{-1}$(k)}
%
% little arrows to indicate labels:
\put(1,2.8){\vector(0,-1){0.6}}
\put(4,2.8){\vector(0,-1){0.6}}
\put(9,2.8){\vector(0,-1){0.6}}
\put(0,0){\circle*{0.15}}\put(1,0){\circle*{0.15}}
\put(2,0){\circle*{0.15}}\put(3,0){\circle*{0.15}}
\put(4,0){\circle*{0.15}}\put(5,0){\circle*{0.15}}
\put(6,0){\circle*{0.15}}\put(7,0){\circle*{0.15}}
\put(8,0){\circle*{0.15}}\put(9,0){\circle*{0.15}}
\put(0,1){\circle*{0.15}}\put(1,1){\circle*{0.15}}
\put(2,1){\circle*{0.15}}\put(3,1){\circle*{0.15}}
\put(4,1){\circle*{0.15}}\put(5,1){\circle*{0.15}}
\put(6,1){\circle*{0.15}}\put(7,1){\circle*{0.15}}
\put(8,1){\circle*{0.15}}\put(9,1){\circle*{0.15}}
\put(0,2){\circle*{0.15}}\put(1,2){\circle*{0.15}}
\put(2,2){\circle*{0.15}}\put(3,2){\circle*{0.15}}
\put(4,2){\circle*{0.15}}\put(5,2){\circle*{0.15}}
\put(6,2){\circle*{0.15}}\put(7,2){\circle*{0.15}}
\put(8,2){\circle*{0.15}}\put(9,2){\circle*{0.15}}
\color{red}
\put(3,2){\vector(1,-1){1}}
\color{blue}
\put(5,2){\vector(-2,-1){2}}
\color{green}
\put(8,2){\vector(-2,-1){2}}
\color{cyan}
\put(7,2){\vector(1,-1){1}}
\color{black}
\put(4,1){\line(-1,-1){1}}
\put(3,1){\line(-1,-1){1}}
\put(1,1){\line(1,1){1}}
\put(5,0){\line(4,1){4}}
\end{picture} 
$$
$$
\setlength\unitlength{0.5cm}
\begin{picture}(22,2)(-1,0.5)
\put(12,0.8){\parbox{2in}{The second picture shows the result of 
conjugating $\alpha$ by $(k\ t)$}}
\put(-1,0.3){$\beta$}
\put(-1.2,1.3){$\alpha^\sigma$}
\put(-0.3,1.9){\vector(0,-1){0.6}}
\put(-0.3,0.1){\vector(0,1){0.6}}
\put(7.9,2.2){\tiny t}
\put(2.9,2.2){\tiny k}
\put(4.9,2.2){\tiny u}
\put(6.9,2.2){\tiny v}
\put(0,0){\circle*{0.15}}\put(1,0){\circle*{0.15}}
\put(2,0){\circle*{0.15}}\put(3,0){\circle*{0.15}}
\put(4,0){\circle*{0.15}}\put(5,0){\circle*{0.15}}
\put(6,0){\circle*{0.15}}\put(7,0){\circle*{0.15}}
\put(8,0){\circle*{0.15}}\put(9,0){\circle*{0.15}}
\put(0,1){\circle*{0.15}}\put(1,1){\circle*{0.15}}
\put(2,1){\circle*{0.15}}\put(3,1){\circle*{0.15}}
\put(4,1){\circle*{0.15}}\put(5,1){\circle*{0.15}}
\put(6,1){\circle*{0.15}}\put(7,1){\circle*{0.15}}
\put(8,1){\circle*{0.15}}\put(9,1){\circle*{0.15}}
\put(0,2){\circle*{0.15}}\put(1,2){\circle*{0.15}}
\put(2,2){\circle*{0.15}}\put(3,2){\circle*{0.15}}
\put(4,2){\circle*{0.15}}\put(5,2){\circle*{0.15}}
\put(6,2){\circle*{0.15}}\put(7,2){\circle*{0.15}}
\put(8,2){\circle*{0.15}}\put(9,2){\circle*{0.15}}
\color{red}
\put(8,2){\vector(-4,-1){4}}
\color{blue}
\put(5,2){\vector(3,-1){3}}
\color{green}
\put(3,2){\vector(3,-1){3}}
\color{cyan}
\put(7,2){\vector(-4,-1){4}}
\color{black}
\put(4,1){\line(-1,-1){1}}
\put(3,1){\line(-1,-1){1}}
\put(1,1){\line(1,1){1}}
\put(5,0){\line(4,1){4}}
\end{picture} 
$$
\end{proof}

\begin{remark}
\label{rem:lemma5forn=4}
Lemma~\ref{lem:alpha_cong_nowhere=beta}
also holds for $n=4$, except (up to conjugacy and order) in the case 
$\alpha=(1\ 2\ 3)$, $\beta=(1\ 2)(3\ 4)$.  
%This may be checked by working through
%the $10$ cases by hand.
One can check this case by case.
\end{remark}

\begin{cora}
\label{cor:fixedpointfreeproduct}
If $\alpha,\beta\in S_n$, $n\ge 4$, and $\alpha$ is  a fixed point free
permutation,
then for some $\sigma\in S_n$, $\alpha^\sigma\beta$ is also fixed point free.
\end{cora}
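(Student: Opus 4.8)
The plan is to derive the corollary from Lemma~\ref{lem:alpha_cong_nowhere=beta} by rewriting the fixed-point-free condition on the product as a "disagree everywhere" condition on a pair of permutations. Note that $\alpha^\sigma\beta$ fixes $i$ precisely when $\alpha^\sigma(\beta(i))=i$, equivalently, setting $j=\beta(i)$, when $\alpha^\sigma(j)=\beta^{-1}(j)$. Hence $\alpha^\sigma\beta$ is fixed point free if and only if $\alpha^\sigma(j)\ne\beta^{-1}(j)$ for every $j\in\{1,\dots,n\}$, and this last statement is exactly the conclusion of Lemma~\ref{lem:alpha_cong_nowhere=beta} applied to the pair $\alpha$ and $\beta^{-1}$ (in the role of $\beta$).

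So first I would invoke Lemma~\ref{lem:alpha_cong_nowhere=beta} for $\alpha$ and $\beta^{-1}$. The essential use of the hypothesis that $\alpha$ is fixed point free is that it makes the numerical requirement of that lemma automatic: here $\alpha$ has $r=0$ fixed points, while $\beta^{-1}$ has the same number $s\le n$ of fixed points as $\beta$, so $r+s=s\le n$ holds for every choice of $\beta$. Thus for $n\ge 5$ the lemma applies unconditionally and supplies a $\sigma\in S_n$ with $\alpha^\sigma(j)\ne\beta^{-1}(j)$ for all $j$; by the reformulation above, $\alpha^\sigma\beta$ is fixed point free, which proves the statement in this range.

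The delicate point, and the main obstacle, is the endpoint $n=4$, where Lemma~\ref{lem:alpha_cong_nowhere=beta} is available only through Remark~\ref{rem:lemma5forn=4}, which excludes the configuration $\{(1\ 2\ 3),(1\ 2)(3\ 4)\}$ up to conjugacy and order. For $n=4$ a fixed point free $\alpha$ is either a $4$-cycle or a product of two disjoint transpositions. In the $4$-cycle case the pair $\{\alpha,\beta^{-1}\}$ cannot coincide with the excluded configuration, so the reduction above goes through verbatim. The remaining possibility is $\alpha$ a double transposition together with $\beta$ (hence $\beta^{-1}$) a $3$-cycle, which lands precisely on the excluded pair, so Remark~\ref{rem:lemma5forn=4} yields nothing and this sub-case must be settled directly. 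I would resolve it by examining the finitely many products $\alpha^\sigma\beta$ one at a time; this short but genuinely separate verification is where the entire difficulty of the $n=4$ endpoint is concentrated, and it is the step I expect to require the most care.
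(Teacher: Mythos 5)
Your reduction for $n\ge 5$ is essentially identical to the paper's own proof, up to a mirror image: the paper applies Lemma~\ref{lem:alpha_cong_nowhere=beta} to the pair $(\alpha^{-1},\beta)$ to get $(\alpha^{-1})^{\sigma}(i)\ne\beta(i)$ for all $i$, while you apply it to $(\alpha,\beta^{-1})$; both arguments use the fixed-point-freeness of $\alpha$ only to make the hypothesis $r+s=s\le n$ automatic. The genuine gap is at the $n=4$ endpoint, and it is not the kind you anticipate: the ``short separate verification'' you defer cannot be carried out, because the statement is false in exactly that sub-case. Take $\alpha=(1\ 2)(3\ 4)$ and $\beta=(1\ 2\ 3)$. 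The conjugates of $\alpha$ are the three nontrivial elements of the Klein four-group $V_4$, which is normal in $S_4$, so every product $\alpha^{\sigma}\beta$ lies in the coset $V_4(1\ 2\ 3)\subset A_4$, all four of whose elements are $3$-cycles; and every $3$-cycle in $S_4$ has a fixed point. Hence no choice of $\sigma$ makes $\alpha^{\sigma}\beta$ fixed point free. This is consistent with Lemma~\ref{lem:n=4eta=1}: for this pair, $\alpha^{S_4}\beta^{S_4}$ is a single conjugacy class, namely the class of $3$-cycles.

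You should also know that your bookkeeping has exposed a defect in the paper itself, not just in your proposal. The paper's one-line proof cites Lemma~\ref{lem:alpha_cong_nowhere=beta} ``and Remark~\ref{rem:lemma5forn=4}'' without further comment, but the exception allowed by that remark --- a $3$-cycle paired with a double transposition, up to conjugacy and order --- is precisely the configuration that arises when $\alpha$ is a double transposition and $\beta$ is a $3$-cycle, so the paper's argument does not cover it either, and Corollary~\ref{cor:fixedpointfreeproduct} as stated is false for $n=4$. The correct repair is a change of hypothesis (assume $n\ge 5$, or exclude the pair consisting of a double transposition and a $3$-cycle in $S_4$), with corresponding care in the paper's later invocations of the corollary when the relevant parameter equals $4$, e.g.\ in Lemma~\ref{fixedpointfree} with $m=4$. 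So your identification of where the difficulty is concentrated was exactly right, but the resolution you propose --- checking that the excluded configuration still satisfies the conclusion --- is not available; the finite check refutes the claim rather than confirming it.
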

\begin{proof}
By Lemma~\ref{lem:alpha_cong_nowhere=beta}
and Remark~\ref{rem:lemma5forn=4},
there exists $\sigma$ with $(\alpha^{-1})^{\sigma}(i)\not=\beta(i)$
for $i=1,\dots,n$, and so
$\alpha^{\sigma}\beta(i)\not=i$ for $i=1,\dots,n$, i.e., 
$\alpha^{\sigma}\beta$ has no fixed points.
\end{proof}

\begin{lemma}\label{fixedpointfree}
Let $\alpha, \beta \in S_m$, $m\ge 4$,
with $\alpha$ fixed point free and $\beta\not=e$. 
We may regard $\alpha$ and $\beta$ as elements in 
$S_n$ for any $n> m$ by defining $\alpha(i)=\beta(i)=i$ for any $m<i\leq n$.
Then  
$\eta(\alpha^{S_m}\beta^{S_m})<\eta(\alpha^{S_n}\beta^{S_n})$.
\end{lemma}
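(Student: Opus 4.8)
The plan is to prove the non-strict inequality first and then produce one genuinely new conjugacy class. Taking $\alpha_1=\beta_1=e$ in Lemma~\ref{bypieces} already gives $\eta(\alpha^{S_m}\beta^{S_m})\le\eta(\alpha^{S_n}\beta^{S_n})$; concretely, if $\sigma_1,\dots,\sigma_k\in S_m$ realize the $k=\eta(\alpha^{S_m}\beta^{S_m})$ distinct types $\Type(\alpha^{\sigma_i}\beta)$, then, viewed in $S_n$, the elements $\alpha^{\sigma_i}\beta$ fix $m+1,\dots,n$, so their types are $\Type(\alpha^{\sigma_i}\beta)\cup\{1^{\,n-m}\}$, still pairwise distinct. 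The key observation is that each of these ``old'' classes has support inside $\{1,\dots,m\}$, hence at least $n-m$ fixed points. Therefore, to obtain strict inequality it suffices to exhibit a single element of $\alpha^{S_n}\beta^{S_n}$ whose support has size $m+1$ (equivalently, with exactly $n-m-1$ fixed points): its type, having fewer than $n-m$ fixed points, cannot match any old type, so the number of classes increases by at least one.

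To build such an element I would first normalize $\alpha$ inside $S_m$. By Corollary~\ref{cor:fixedpointfreeproduct} applied in $S_m$ (valid since $\alpha$ is fixed point free and $m\ge4$) there is $\sigma_0\in S_m$ with $\gamma:=\alpha^{\sigma_0}\beta$ fixed point free on $\{1,\dots,m\}$. Writing $\alpha'=\alpha^{\sigma_0}$, I then drag in one new letter by conjugating $\alpha'$ with the transposition $\tau$ that swaps $x$ and $m+1$, where $x\in\{1,\dots,m\}$ is chosen with $\beta(x)\ne x$ (possible because $\beta\ne e$). Setting $\sigma=\sigma_0\tau$ gives $\delta:=\alpha^{\sigma}\beta=(\alpha')^{\tau}\beta\in\alpha^{S_n}\beta^{S_n}$.

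The heart of the argument, and the step I expect to be the main obstacle, is the bookkeeping that $\delta$ has support exactly $\{1,\dots,m+1\}$. On one hand $\delta$ fixes $m+2,\dots,n$ and moves $m+1$, since one computes $\delta(m+1)=\alpha'(x)\le m$. On the other hand, for $i\le m$ one checks that $\delta(i)$ agrees with $\gamma(i)$ except at the two points $i=\beta^{-1}(x)$ and $i=\beta^{-1}(\alpha')^{-1}(x)$, where the value becomes $x$ and $m+1$ respectively. As $\gamma$ is fixed point free, the only way $\delta$ could acquire a fixed point in $\{1,\dots,m\}$ is at $i=\beta^{-1}(x)$ with value $x$, i.e.\ when $\beta(x)=x$ --- which is exactly what the choice $\beta(x)\ne x$ rules out. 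Hence $\delta$ has no fixed point in $\{1,\dots,m\}$, is fixed point free on a set of size $m+1$, and its class is new relative to the $k$ old ones. Combining this with the first paragraph yields $\eta(\alpha^{S_n}\beta^{S_n})\ge k+1>\eta(\alpha^{S_m}\beta^{S_m})$, as required.
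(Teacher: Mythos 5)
Your proof is correct and follows essentially the same route as the paper: both use Corollary~\ref{cor:fixedpointfreeproduct} to make the product fixed point free on $\{1,\dots,m\}$, then conjugate by a transposition swapping the new letter $m+1$ with a point moved by $\beta$, and verify the resulting product has exactly $n-m-1$ fixed points, hence lies in a class distinct from the ones inherited from $S_m$. The only difference is cosmetic: the paper conjugates $\beta$ as well so that the moved point can be taken to be $m$ itself, whereas you keep $\beta$ fixed and work with a general moved point $x$.
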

\begin{proof}
All elements of $\alpha^{S_m}\beta^{S_m}$, considered as elements of
$S_n$, fix at least $n-m$ points.
Thus it suffices to show that
some element of
$\alpha^{S_n}\beta^{S_n}$ fixes fewer than $m-n$ points.
By replacing $\alpha$ and $\beta$ by conjugates if necessary, by
Corollary ~\ref{cor:fixedpointfreeproduct}, 
we may assume that $\alpha\beta$ does not fix
$1,\dots,m$, and that $\beta$ does not
fix $m$. Set $r=\beta^{-1}(m)$ and $s=\beta^{-1}\alpha^{-1}(m)$.
Now
$\alpha^{(m\ m+1)}\beta(i)=\alpha\beta(i)$
unless 
$i=r$, $i=s$, or
$i=\beta^{-1}(m+1)=\beta^{-1}\alpha^{-1}(m+1)=m+1$.
Because $\alpha$ is fixed point free, 
$\alpha(m)\not=m$, and so we have
\begin{eqnarray*}
\alpha^{(m\ m+1)}\beta(r)
&=&\alpha^{(m\ m+1)}(m)
=(m\ m+1)\alpha(m+1)=m\not=r,\\
\alpha^{(m\ m+1)}\beta(s)
&=&
\alpha^{(m\ m+1)}\beta( \beta^{-1}\alpha^{-1}(m))=\alpha^{( m\ m+1)}    \alpha^{-1}(m)
=m+1\not=s\\
\alpha^{(m\ m+1)}\beta(m+1)
&=&
\alpha^{(m\ m+1)}(m+1)=
(m\ m+1)\alpha(m)=\alpha(m)\not=m+1,
\end{eqnarray*}
and so $\alpha^{(m\ m+1)}\beta$ has $n-m-1$ fixed points and
the result follows.
\end{proof}
\begin{lemma}
\label{producttransposition}
Let $n>3$ be an integer. Then $\eta((1\ 2)^{S_n}(1\ 2)^{S_n})=3$.
\end{lemma}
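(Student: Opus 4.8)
The plan is to enumerate the possible cycle types of a product of two transpositions and to observe that exactly three of them occur once $n>3$. By Lemma~\ref{lemma1}, the class $(1\ 2)^{S_n}$ is precisely the set of all transpositions of $S_n$, so every element of $(1\ 2)^{S_n}(1\ 2)^{S_n}$ has the form $(a\ b)(c\ d)$, where $\{a,b\}$ and $\{c,d\}$ are two (not necessarily distinct) pairs of distinct points. First I would split the analysis according to the size of the overlap $\{a,b\}\cap\{c,d\}$, which can only be $0$, $1$, or $2$.

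A short direct computation then handles each of the three cases: (i) if $\{a,b\}=\{c,d\}$ then $(a\ b)(c\ d)=e$, with $\Type=\{1,\dots,1\}$; (ii) if the two pairs share exactly one point, then the product is a $3$-cycle, with $\Type=\{3,1,\dots,1\}$; and (iii) if the two pairs are disjoint, then the product is a pair of disjoint transpositions, with $\Type=\{2,2,1,\dots,1\}$. Thus the whole product $(1\ 2)^{S_n}(1\ 2)^{S_n}$ is contained in the union of the three conjugacy classes determined by these cycle types, which are pairwise distinct by Lemma~\ref{lemma1}; this gives the upper bound $\eta\bigl((1\ 2)^{S_n}(1\ 2)^{S_n}\bigr)\le 3$. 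To obtain the reverse inequality I would exhibit one element of each class lying in the product: $(1\ 2)(1\ 2)=e$ realizes case (i), $(1\ 2)(2\ 3)$ realizes case (ii), and $(1\ 2)(3\ 4)$ realizes case (iii). Since each cycle type is attained, all three classes appear, so $\eta=3$.

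There is essentially no hard step here; the entire content is the finite case analysis together with the check that all three types are actually realized. The only place the hypothesis $n>3$ is used is to guarantee that case (iii) is nonempty, i.e.\ that $S_n$ contains two disjoint transpositions so that $(1\ 2)(3\ 4)$ makes sense and $\Type=\{2,2,1,\dots,1\}$ is a genuine cycle type of an element of $S_n$. For $n=3$ only cases (i) and (ii) can occur, which is exactly why the statement excludes that value and is consistent with $\eta=2$ there. Hence the claim follows for all $n>3$.
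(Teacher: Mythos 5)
Your proposal is correct and follows essentially the same argument as the paper: a case analysis on the size of the overlap of the two transpositions' supports, yielding exactly the three cycle types (identity, $3$-cycle, product of two disjoint transpositions), with the identification of classes via Lemma~\ref{lemma1}. Your version is only slightly more explicit than the paper's in separating the upper bound from the realization of each type and in noting where $n>3$ is used.
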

\begin{proof}
An element of 
$(1\ 2)^{S_n}(1\ 2)^{S_n}$ has the form $(i\ j)(k\ l)$, with
$i,j,k,l\in \{1, \ldots, n\}$ and
 $i\neq j$, $k\neq l$.
Depending on the size of the set $\{i, j\}\cap\{k, l\}$,
$(i\ j)(k\ l)$ is conjugate to one of the following:
$(1\ 2)(1\ 2)=e$,
$(1\ 2)(1\ 3)=(1\ 3\ 2)$, $(1\ 2)(3\ 4)$.
Thus the permutations in  $(1\ 2)^{S_n}(1\ 2)^{S_n}$ are
 the identity, 3-cycles and the product of two disjoint transpositions. By
 Lemma \ref{lemma1} we have then that 
  $\eta((1\ 2)^{S_n}(1\ 2)^{S_n})=3$.
\end{proof}
\begin{remark}
The example given by the previous Lemma
shows that the hypothesis that 
$\alpha$ is fixed point free can not be dropped from 
Lemma \ref{fixedpointfree}.
\end{remark}
\begin{lemma}
\label{lem:n=4eta=1}
If $\alpha,\beta\in S_4\setminus\{e\}$
and $\eta(\alpha^{S_4}\beta^{S_4})=1$, then up to conjugation 
$\{\alpha,\beta\}=\{(1\ 2\ 3),(1\ 2)(3\ 4)\}$.
\end{lemma}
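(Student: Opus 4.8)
The plan is to treat this as a finite verification over the four nontrivial conjugacy classes of $S_4$, organized by the parity of the product. Write $T$, $C$, $D$, $F$ for the classes of transpositions, $3$-cycles, products of two disjoint transpositions, and $4$-cycles; by Lemma~\ref{lemma1} these are the only nontrivial classes. By Lemma~\ref{interchange} the product $\alpha^{S_4}\beta^{S_4}$ depends only on the unordered pair of classes, so there are ten cases. The key reduction is that every element of $\alpha^{S_4}\beta^{S_4}$ has sign equal to the product of the signs of $\alpha$ and $\beta$, so the product lies entirely inside $A_4$ or entirely inside its complement. The complement $S_4\setminus A_4$ meets only the two classes $T$ and $F$, while $A_4$ is the disjoint union of the three classes $\{e\}$, $C$, and $D$. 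Thus in each case $\eta$ is just the number of these (at most three) classes the product meets, and the argument reduces to exhibiting representatives.

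For the nine pairs other than $C\cdot D$ I would show $\eta\ge2$. The four odd pairs $T\cdot C$, $T\cdot D$, $C\cdot F$, $D\cdot F$ each produce both a transposition and a $4$-cycle, so $\eta=2$. Among the even pairs, $T\cdot T$ has $\eta=3$ by Lemma~\ref{producttransposition}; for $C\cdot C$ and $F\cdot F$ one displays the identity (as $\alpha\alpha^{-1}$), a $3$-cycle, and a double transposition, giving $\eta=3$. For $T\cdot F$ one exhibits a $3$-cycle and a double transposition, noting the identity cannot occur since $T$ and $F$ consist of elements of different orders, so $\eta=2$; and for $D\cdot D$, since $D\cup\{e\}$ is the Klein four-subgroup $V$, which is closed under multiplication, the product lies in $V$ and consists of the identity together with double transpositions, so $\eta=2$.

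This leaves the single pair $C\cdot D$, which I claim gives $\eta=1$ and accounts for the exceptional pair $\{(1\ 2\ 3),(1\ 2)(3\ 4)\}$ in the statement. The product is even, and it cannot contain the identity, since a $3$-cycle and a double transposition have different orders and so neither is the inverse of the other. The main point is that it cannot contain a double transposition either: if a $3$-cycle $\sigma$ and a double transposition $\tau$ satisfied $\sigma\tau\in D$, then $\sigma=(\sigma\tau)\tau^{-1}$ would be a product of two elements of $V$, hence lie in $V$, contradicting that $\sigma$ is a $3$-cycle. Therefore every element of $C\cdot D$ is a $3$-cycle; since the product is nonempty and closed under conjugation, and the $3$-cycles form a single conjugacy class by Lemma~\ref{lemma1}, it equals that class and $\eta=1$.

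The verification is elementary, so there is no serious obstacle; the only point requiring an idea rather than a direct computation is the $C\cdot D$ case, where the observation that multiplication within $V$ forces the product to avoid $D$ is what pins down $\eta=1$. I would present the ten cases compactly, listing one or two representative products per case, and conclude that $\eta(\alpha^{S_4}\beta^{S_4})=1$ forces $\{\alpha,\beta\}=\{C,D\}$ up to conjugation, i.e. $\{\alpha,\beta\}=\{(1\ 2\ 3),(1\ 2)(3\ 4)\}$.
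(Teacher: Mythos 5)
Your proof is correct, and it takes a genuinely different route from the paper, because the paper offers no explicit argument at all: its entire proof of this lemma is the sentence that the claim ``can be checked by hand, or by computer e.g., MAGMA.'' You supply a structured verification in place of that appeal to computation. Two organizing ideas make your hand check short and fully readable. First, since sign is a homomorphism and conjugation preserves it, each product $\alpha^{S_4}\beta^{S_4}$ lies entirely in $A_4=\{e\}\cup C\cup D$ or entirely in $T\cup F$, so each of the ten unordered pairs of classes is settled by exhibiting at most three (for odd products, at most two) representatives. Second, the Klein four-subgroup $V=\{e\}\cup D$ handles the one case that cannot be settled merely by exhibiting elements, namely $C\cdot D$: if $\sigma\in C$, $\tau\in D$ and $\sigma\tau\in V$, then $\sigma=(\sigma\tau)\tau^{-1}\in V$, impossible for a $3$-cycle, so every element of $C\cdot D$ is a $3$-cycle. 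Your representative computations for the remaining nine pairs all exist as claimed (e.g.\ a transposition times a suitable $3$-cycle is a $4$-cycle, $(1\ 2\ 3\ 4)^2=(1\ 3)(2\ 4)$ gives a double transposition in $F\cdot F$, and $T\cdot T$ is Lemma~\ref{producttransposition} with $n=4$). What your approach buys is a proof that can be read and checked line by line rather than recomputed; what the paper's buys is brevity, at the cost of leaving the verification to the reader or to MAGMA. Note also that you prove slightly more than the lemma asserts: the exceptional pair genuinely attains $\eta=1$, since $C\cdot D$ is a nonempty conjugation-invariant set consisting of $3$-cycles and hence equals the full class $C$; the stated implication only requires $\eta\ge 2$ in the other nine cases, but your converse is exactly the fact that makes this pair the unique $S_4$ exception.
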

\begin{proof}
This can be checked by hand, or by computer
e.g., MAGMA \cite{magma}.
\end{proof}
\begin{lemma}\label{atleastone}
If $\alpha,\beta\in S_n\setminus\{e\}$ then 
there exists a permutation $\gamma \in S_n$ 
such that $\gamma$ fixes at least one point and 
$\gamma \in \alpha^{S_n}\beta^{S_n}$. 
\end{lemma}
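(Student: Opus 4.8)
The plan is to realize the desired $\gamma$ in the concrete form $\gamma=\alpha^{\sigma}\beta$ for a well-chosen $\sigma\in S_n$. Since $\alpha^{\sigma}\in\alpha^{S_n}$ and $\beta\in\beta^{S_n}$, every such element already lies in $\alpha^{S_n}\beta^{S_n}$, so it suffices to arrange that $\alpha^{\sigma}\beta$ has a fixed point. Observe that $\alpha^{\sigma}\beta$ fixes a point $p$ precisely when $\alpha^{\sigma}(\beta(p))=p$, that is, when the conjugate $\alpha^{\sigma}$ agrees with $\beta^{-1}$ at the single point $\beta(p)$. Thus the whole problem reduces to producing one conjugate of $\alpha$ that agrees with $\beta^{-1}$ somewhere.

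First I would use the hypothesis $\beta\ne e$ to pick a point $j$ with $k:=\beta^{-1}(j)\ne j$ (such $j$ exists because $\beta^{-1}\ne e$). Next I would use $\alpha\ne e$ to pick a point $a$ with $b:=\alpha(a)\ne a$. Because $j\ne k$ and $a\ne b$, the partial assignment $j\mapsto a,\ k\mapsto b$ is injective and hence extends to a permutation $\sigma\in S_n$ with $\sigma(j)=a$ and $\sigma(k)=b$. With the paper's convention $\alpha^{\sigma}=\sigma^{-1}\alpha\sigma$ one then computes $\alpha^{\sigma}(j)=\sigma^{-1}\alpha\sigma(j)=\sigma^{-1}\alpha(a)=\sigma^{-1}(b)=k=\beta^{-1}(j)$, so $\alpha^{\sigma}$ agrees with $\beta^{-1}$ at $j$. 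Consequently $\gamma=\alpha^{\sigma}\beta$ fixes the point $k$, since $\gamma(k)=\alpha^{\sigma}(\beta(k))=\alpha^{\sigma}(j)=k$, and $\gamma\in\alpha^{S_n}\beta^{S_n}$ as required.

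I do not expect a genuine obstacle here: the construction is uniform and uses nothing beyond $\alpha\ne e$ and $\beta\ne e$, so in particular it needs none of the fixed-point counting in Lemma~\ref{lem:alpha_cong_nowhere=beta}. The only point demanding care is the bookkeeping of the conjugation convention when checking $\alpha^{\sigma}(j)=\beta^{-1}(j)$; it is reassuring that the argument is robust to this, for under the opposite (left-to-right) composition convention the very same $\sigma$ makes $\gamma$ fix $j$ instead of $k$. One could alternatively organize the proof by cases according to whether $\alpha$ and $\beta$ have fixed points---if both do, align a fixed point of a conjugate of $\alpha$ with a fixed point of $\beta$---but the single construction above subsumes all cases and is shorter.
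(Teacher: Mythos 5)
Your proof is correct and is essentially the paper's own argument: the paper likewise just conjugates so that $\alpha$ sends some point to another point which $\beta$ sends back (there, $\alpha(1)=2$ and $\beta(2)=1$), so that the product has a fixed point; your $\sigma$ realizes exactly this alignment, conjugating only $\alpha$ instead of both elements. The one inaccuracy is your parenthetical claim that the \emph{very same} $\sigma$ works under the opposite composition convention---for an arbitrary extension of $j\mapsto a$, $k\mapsto b$ it need not, and one would instead use an extension of $a\mapsto j$, $b\mapsto k$---but that side remark is not part of the proof, which stands as written.
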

\begin{proof}
Since $\alpha,\beta\in S_n\setminus\{e\}$, by Lemma \ref{lemma1}
we may assume that $\alpha(1)=2$ and $\beta(2)=1$. Thus
$\alpha\beta(1)=1$ and the proof is complete.
\end{proof}

\begin{lemma}
\label{lem:fixedpointfree}
Let  $n\ge 5$ and $\alpha,\beta\in S_n\setminus\{e\}$. If
$\eta(\alpha^{S_n}\beta^{S_n})\le 2$ 
then $\eta(\alpha^{S_n}\beta^{S_n})= 2$, and
at least  one of $\alpha$ and
$\beta$ is a  fixed point free permutation.
\end{lemma}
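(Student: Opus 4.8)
The plan is to reduce the statement to two independent claims and combine them. Recall from Lemma~\ref{lemma1} that two elements of $S_n$ are conjugate if and only if they have the same cycle type; in particular, two elements with different numbers of fixed points are never conjugate, so whenever $\alpha^{S_n}\beta^{S_n}$ contains elements realizing $k$ distinct fixed-point numbers we have $\eta(\alpha^{S_n}\beta^{S_n})\ge k$. By Lemma~\ref{interchange} the roles of $\alpha$ and $\beta$ are interchangeable, so I may freely name whichever one is fixed point free. The two claims are: (I) if at least one of $\alpha,\beta$ is fixed point free then $\eta(\alpha^{S_n}\beta^{S_n})\ge 2$; and (II) if neither $\alpha$ nor $\beta$ is fixed point free then $\eta(\alpha^{S_n}\beta^{S_n})\ge 3$. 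Granting these, the lemma is immediate: if $\eta(\alpha^{S_n}\beta^{S_n})\le 2$, then (II) forces one of $\alpha,\beta$ to be fixed point free, and then (I) gives $\eta\ge 2$, hence $\eta=2$. Claim (I) is short: assuming $\alpha$ fixed point free, Lemma~\ref{atleastone} produces an element of the product fixing at least one point, while Corollary~\ref{cor:fixedpointfreeproduct} produces a fixed point free element; these have different numbers of fixed points, so $\eta\ge 2$.

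Claim (II) is the main content. Write $r$ and $s$ for the numbers of fixed points of $\alpha$ and $\beta$, so $r,s\ge 1$ by hypothesis, and each support has size at least $2$ since $\alpha,\beta\ne e$. Set $M:=(n-r)+(n-s)$, the combined support size, and split on $M$ versus $n$. If $M<n$ (equivalently $r+s>n$), then every element $\alpha^{\sigma}\beta$ is supported on at most $M$ points, so after conjugating $\alpha$ and $\beta$ so that both supports lie in $\{1,\dots,M\}$ we may regard them as elements of $S_M$ with $M<n$, and Lemma~\ref{bypieces} (with $\alpha_1=\beta_1=e$) gives $\eta(\alpha^{S_n}\beta^{S_n})\ge\eta(\alpha^{S_M}\beta^{S_M})$. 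Inside $S_M$ the two fixed-point counts become $n-s\ge 2$ and $n-r\ge 2$, which sum to exactly $M$, so neither permutation is fixed point free in $S_M$ and we are in the case $r+s\le n$ (for the group $S_M$). If $M=4$ then $\alpha,\beta$ are both transpositions and Lemma~\ref{producttransposition} already gives $\eta=3$; if $M\ge 5$ we apply the $r+s\le n$ part of (II) in $S_M$. Hence it suffices to prove (II) under the extra hypothesis $r+s\le n$.

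In this remaining case I will exhibit three elements of $\alpha^{S_n}\beta^{S_n}$ with $0$, $1$, and $2$ fixed points. An element with $0$ fixed points comes directly from Lemma~\ref{lem:alpha_cong_nowhere=beta} applied to $\alpha^{-1}$ (which has $r$ fixed points) and $\beta$: since $r+s\le n$, there is $\sigma$ with $(\alpha^{-1})^{\sigma}(i)\ne\beta(i)$ for all $i$, i.e.\ $\alpha^{\sigma}\beta$ is fixed point free. For the other two values I conjugate $\alpha$ so as to force prescribed coincidences $\beta(i)=(\alpha^{-1})^{\sigma}(i)$: overlapping one, respectively two, fixed points of a conjugate of $\alpha$ with fixed points of $\beta$ yields elements of the product with exactly one, respectively two, fixed points. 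The \emph{main obstacle} is the subcase $\min(r,s)=1$: if $\beta$ (say) fixes only one point, a second fixed point of the product cannot be manufactured from common fixed points, and one must instead arrange a coincidence at a point moved by both permutations. This needs a small explicit construction that uses $n\ge 5$ (for instance $\alpha=(1\ 4\ 2\ 5)$, $\beta=(1\ 2)$ in $S_5$ gives the product $(1\ 5)(2\ 4)$, with one fixed point). Once the three fixed-point numbers $0,1,2$ are realized, Lemma~\ref{lemma1} gives $\eta\ge 3$, completing claim (II) and hence the lemma.
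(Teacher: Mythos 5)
Your overall strategy is genuinely different from the paper's: the paper proves this lemma by induction on $n$, using the strict inequality of Lemma~\ref{fixedpointfree} to descend to $S_{n-1}$ when both permutations fix a point, with base case $n=5$ settled by the $S_4$ classification (Lemma~\ref{lem:n=4eta=1}) plus an explicit $S_5$ computation, whereas you try to exhibit directly three elements of the product with $0$, $1$, and at least $2$ fixed points. Your claim (I) coincides with the paper's closing argument and is fine, as is the support-size reduction via Lemma~\ref{bypieces}. But claim (II) has a genuine gap at its crux. Your middle element is produced by ``overlapping one fixed point of a conjugate of $\alpha$ with a fixed point of $\beta$,'' and you assert this yields \emph{exactly} one fixed point. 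It does not: the fixed points of $\alpha^\sigma\beta$ are precisely the coincidence points of $(\alpha^{-1})^\sigma$ and $\beta$, and forcing a coincidence at one common fixed point does nothing to exclude accidental coincidences at points moved by both permutations. Since your count rests on the values $0$, $1$, $\ge 2$ being pairwise distinct, exactness of the middle value is essential. To repair it you must force the coincidence at the common point $p$ and simultaneously avoid coincidences everywhere else, i.e.\ apply Lemma~\ref{lem:alpha_cong_nowhere=beta} to the restrictions of $\alpha^{-1}$ and $\beta$ to the remaining $n-1$ points; this needs $n-1\ge 5$, so at $n=5$ (and when $M=5$ after your reduction) you land in $S_4$, where by Remark~\ref{rem:lemma5forn=4} the avoidance statement fails exactly for the pair $(1\ 2\ 3)$, $(1\ 2)(3\ 4)$ --- a pair satisfying every hypothesis of your claim (II) --- and that case must be computed by hand, which is precisely what the paper's own proof does.

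Second, you explicitly leave the subcase $\min(r,s)=1$ of the ``at least two fixed points'' element unproven; a single numerical example in $S_5$ is not a construction. This gap is real but easy to close, and in fact no case split on $\min(r,s)$ is needed: say $\beta$ fixes $p$ and $\beta(c_1)=c_2\ne c_1$, and $\alpha^{-1}$ fixes $q$ and maps $b_1\mapsto b_2\ne b_1$ (such points exist because $\alpha,\beta\ne e$ and, in claim (II), neither is fixed point free). Choose $\sigma$ so that $(\alpha^{-1})^\sigma$ fixes $p$ and maps $c_1\mapsto c_2$; this is possible because $q,b_1,b_2$ are distinct and $p,c_1,c_2$ are distinct. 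Then $(\alpha^{-1})^\sigma$ and $\beta$ coincide at both $p$ and $c_1$, so $\alpha^\sigma\beta$ has at least two fixed points; this single construction also subsumes your $r,s\ge 2$ overlap argument. With that construction and the repaired ``exactly one'' step (including its $n=5$ exception), your non-inductive argument does go through; as submitted, however, both of the steps that actually force $\eta\ge 3$ are incomplete.
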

\begin{proof}
Suppose $\alpha,\beta\in S_n$ both fix a point, and
$\eta(\alpha^{S_5}\beta^{S_n})\le2$.
Because $\alpha$ and $\beta$ fix some element, they can be considered as
elements of $S_{n-1}$.   Lemma~\ref{fixedpointfree} then implies that
$\eta(\alpha^{S_{n-1}}\beta^{S_{n-1}})=1$.
If $n\ge6$, we can assume the result for $n-1$ inductively, and
this is a contradiction.
If $n=5$, 
%without loss of generality,
by Lemma~\ref{lem:n=4eta=1}, up to conjugation,
$\alpha=(1\ 2\ 3)$ and $\beta=(1\ 2)(3\ 4)$.
But in $S_5$, we have
$(1\ 2\ 3)(1\ 2)(3\ 4)=(1\ 3\ 4)(2)$,
$(1\ 2\ 3)(1\ 2)(4\ 5)=(1\ 3)(4\ 5)$,
$(1\ 2\ 3)(1\ 4)(2\ 5)=(1\ 4\ 2\ 5\ 3)$,
which are all in different conjugacy classes, so 
$\eta(\alpha^{S_5}\beta^{S_5})\ge3$, a contradiction. Thus
we may assume that at least one of  $\alpha$ or $\beta$ 
is a fixed point 
free permutation. By Corollary \ref{cor:fixedpointfreeproduct}
and Lemma~\ref{atleastone} we have that
$\eta(\alpha^{S_n}\beta^{S_n})\geq 2$ and the result follows.
\end{proof}

\begin{remark}
Lemma~\ref{lem:n=4eta=1} shows that
the hypothesis $n\ge 5$ is necessary in Lemma~\ref{lem:fixedpointfree}.
\end{remark}

\begin{lemma}
\label{lem:onefixedpoint}
Let $n\ge 7$ and $\alpha, \beta\in S_n\setminus\{e\}$.
If at least one of $\alpha$ and $\beta$ has a cycle of length
at least three, and at least one of $\alpha$, $\beta$
is fixed point free, then for some $\sigma\in S_n$,
$\alpha^\sigma\beta$ has exactly one fixed point.
\end{lemma}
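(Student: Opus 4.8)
The plan is to turn the statement into a question about coincidences and then to perturb a fixed-point-free product by a single transposition. Writing $\gamma=\alpha^{\sigma}\beta$, a point $i$ is fixed by $\gamma$ exactly when $\alpha^{\sigma}(\beta(i))=i$, i.e. when $\alpha^{\sigma}$ and $\beta^{-1}$ agree at $\beta(i)$; hence the number of fixed points of $\alpha^{\sigma}\beta$ equals the number of positions where $\alpha^{\sigma}$ and $\beta^{-1}$ coincide. By Lemma~\ref{interchange} every element of $\alpha^{S_n}\beta^{S_n}=\beta^{S_n}\alpha^{S_n}$ is conjugate both to some $\alpha^{\sigma}\beta$ and to some $\beta^{\tau}\alpha$, and conjugate permutations have the same number of fixed points; so it suffices to produce a one-fixed-point element by conjugating whichever factor I like, and I may take the conjugated factor to be the fixed point free one. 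Call it $\alpha$ and set $B=\beta^{-1}$, which has the same cycle structure as $\beta$. The hypotheses now read: $\alpha$ is fixed point free, and a cycle of length at least three occurs in $\alpha$ or in $B$. The goal becomes to find $\sigma$ so that $\alpha^{\sigma}$ and $B$ coincide in exactly one position.

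First I would remove all coincidences. Since $\alpha$ is fixed point free, Corollary~\ref{cor:fixedpointfreeproduct} (ultimately Lemma~\ref{lem:alpha_cong_nowhere=beta} with $r=0$) yields $\sigma_0$ with $\alpha^{\sigma_0}\beta$ fixed point free, i.e. $\alpha^{\sigma_0}$ and $B$ coinciding nowhere. Replacing $\alpha$ by $\alpha^{\sigma_0}$, I may assume $\alpha$ is fixed point free, $\alpha(i)\neq B(i)$ for all $i$, and (as the cycle type is preserved) the cycle of length at least three is still present in $\alpha$ or $B$. Now I would create exactly one coincidence by conjugating $\alpha$ by a single transposition $\tau=(x\ y)$. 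Conjugation merely interchanges the labels $x$ and $y$, so $\alpha^{\tau}$ differs from $\alpha$ only at the four points $x,\,y,\,\alpha^{-1}(x),\,\alpha^{-1}(y)$, and therefore $\alpha^{\tau}$ can coincide with $B$ only there, the coincidences being governed exactly by the four conditions $B(\alpha^{-1}(x))=y$, $B(\alpha^{-1}(y))=x$, $B(x)=\alpha(y)$ and $B(y)=\alpha(x)$. For fixed $x$ each of these determines a single value of $y$, namely $B\alpha^{-1}(x)$, $\alpha B^{-1}(x)$, $\alpha^{-1}B(x)$ and $B^{-1}\alpha(x)$; so I must find $x$ (with $x,y,\alpha^{-1}(x),\alpha^{-1}(y)$ distinct) for which one of these four values differs from $x$ and from the other three, and take $y$ equal to it, producing exactly one coincidence.

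The main obstacle, and the only place the cycle-of-length-$\geq 3$ hypothesis enters, is to guarantee that such a transposition exists. I would anchor the search at three consecutive points on the long cycle (in $\alpha$ or in $B$) and argue that each of the finitely many obstructions, namely an unwanted extra coincidence or a degenerate merging of the four affected points, excludes only boundedly many choices of $x$, so that for $n\geq 7$ a valid choice survives. The cycle of length at least three is precisely what rules out the degenerate situation: if $\alpha$ and $B$ were both involutions, then for every $\sigma$ the coincidences of $\alpha^{\sigma}$ and $B$ would pair up, since $\alpha^{\sigma}(i)=B(i)=j$ forces $\alpha^{\sigma}(j)=i=B(j)$ with $j\neq i$ (as $\alpha^{\sigma}$ is fixed point free), so their number would always be even and never one; this is the same phenomenon seen in Lemma~\ref{producttransposition}, where the fixed-point counts are only $n$, $n-3$, $n-4$. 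Requiring a cycle of length at least three on $\alpha$ or $B$ breaks this pairing, and I expect the remainder to be a finite, if somewhat tedious, verification split according to whether the long cycle lies in the conjugated factor or in the other one.
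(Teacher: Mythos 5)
You set the problem up correctly: counting fixed points of $\alpha^\sigma\beta$ as coincidences of $\alpha^\sigma$ with $B=\beta^{-1}$, using Lemma~\ref{interchange} to make the conjugated factor the fixed point free one, clearing all coincidences with Corollary~\ref{cor:fixedpointfreeproduct}, and identifying the four affected points $x,y,\alpha^{-1}(x),\alpha^{-1}(y)$ and the four equations governing new coincidences after conjugating by $(x\ y)$. The gap is the existence step, which is the entire content of the lemma. Your justification --- that each obstruction ``excludes only boundedly many choices of $x$'' --- is false: the obstructions are equations between fixed permutations, such as $B\alpha(z)=\alpha B(z)$, and such an equation can hold for \emph{every} $z$, namely whenever the zero-coincidence representative handed to you by Corollary~\ref{cor:fixedpointfreeproduct} commutes with $B$. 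In that case commutativity pairs new coincidences up: $B(x)=\alpha(y)\iff B\alpha^{-1}(x)=y$, so a coincidence at $x$ forces one at $\alpha^{-1}(x)\neq x$ (distinct because $\alpha$ is fixed point free), and likewise at $y$ and $\alpha^{-1}(y)$; in the degenerate case $y=\alpha(x)$ the count can be odd only if $B(x)=\alpha^{-1}(x)$. A concrete instance satisfying every hypothesis of the lemma: $n=12$, $\alpha=(1\ 2\ 3)(4\ 5\ 6)(7\ 8\ 9)(10\ 11\ 12)$, $B=(1\ 4)(2\ 5)(3\ 6)(7\ 10)(8\ 11)(9\ 12)$. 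These commute, coincide nowhere, and satisfy $B(x)\notin\{\alpha^{-1}(x),\alpha(x),\alpha^{2}(x)\}$ for all $x$, so conjugating this $\alpha$ by any single transposition yields an \emph{even} number of coincidences with $B$: your second step can never output exactly one, even though the lemma is true for this pair. The failure is therefore not tedium but structure: you treat the zero-coincidence configuration as a black box, whereas the hard part of the lemma is precisely to control the relative position of the two permutations; and the counterexample at $n=6$ (Remark~\ref{rem:Lemma12in_casen=6}) already warns that no generic count of excluded points can be expected to work right down to $n\ge 7$.

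The paper's proof avoids this trap by reversing your two steps: it first installs the single desired fixed point, using the cycle of length at least three to arrange $\alpha\beta(1)=1$ and $\alpha\beta(2)\neq 2$ (three explicit cases according to whether the long cycle lies in $\alpha$, in $\beta$, or in both), and only then removes the remaining coincidences, running the induction of Lemma~\ref{lem:alpha_cong_nowhere=beta} from $k=3$ onward with each conjugating transposition chosen to avoid $\{1,2\}\cup S$; since $|S|\le 4$ this is possible exactly when $n\ge 7$, and the protected coincidence at $1$ survives every step. To rescue your order of operations you would have to show the zero-coincidence configuration can be \emph{chosen}, not merely assumed, to be free of the commuting-type degeneracy above --- which is essentially the work the paper does up front.
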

\begin{proof}
Since $\alpha, \beta\not=e$, after conjugation, we may assume
$\alpha(2)=1, \beta(1)=2$.
We have three cases:
(i) $\alpha$ and $\beta$ both  contain cycles of length at least three;
(ii) only $\alpha$ has a cycle length at least three;
(iii) only $\beta$ has a cycle length at least three.
In these cases, illustrated in the diagram below, we may conjugate so that 
(i) $\alpha(3)=2, \beta(2)=4$,
or
(ii) $\alpha(1)=3, \beta(2)=1$,
or
(iii) $\alpha(1)=2, \beta(3)=1$ respectively.
$$
\setlength\unitlength{0.5cm}
\begin{picture}(15,2.5)(-1,0)
\put(-1,0.3){$\alpha$}
\put(-1,1.3){$\beta$}
\put(-0.3,1.9){\vector(0,-1){0.6}}
\put(-0.3,0.9){\vector(0,-1){0.6}}
\put(-0.08,2.2){\tiny 1}
\put(0.9,2.2){\tiny2}
\put(1.9,2.2){\tiny3}
\put(2.9,2.2){\tiny4}
\put(0,0){\circle*{0.15}}\put(1,0){\circle*{0.15}}
\put(2,0){\circle*{0.15}}\put(3,0){\circle*{0.15}}
\put(0,1){\circle*{0.15}}\put(1,1){\circle*{0.15}}
\put(2,1){\circle*{0.15}}\put(3,1){\circle*{0.15}}
\put(0,2){\circle*{0.15}}\put(1,2){\circle*{0.15}}
\put(2,2){\circle*{0.15}}\put(3,2){\circle*{0.15}}
\put(0,2){\line(1,-1){1}}
\put(1,1){\line(-1,-1){1}}
\put(2,1){\line(-1,-1){1}}
\put(1,2){\line(2,-1){2}}
\put(6,0){
\put(-1,0.3){$\alpha$}
\put(-1,1.3){$\beta$}
\put(-0.3,1.9){\vector(0,-1){0.6}}
\put(-0.3,0.9){\vector(0,-1){0.6}}
\put(-0.08,2.2){\tiny 1}
\put(0.9,2.2){\tiny2}
\put(1.9,2.2){\tiny3}
\put(2.9,2.2){\tiny4}
\put(0,0){\circle*{0.15}}\put(1,0){\circle*{0.15}}
\put(2,0){\circle*{0.15}}\put(3,0){\circle*{0.15}}
\put(0,1){\circle*{0.15}}\put(1,1){\circle*{0.15}}
\put(2,1){\circle*{0.15}}\put(3,1){\circle*{0.15}}
\put(0,2){\circle*{0.15}}\put(1,2){\circle*{0.15}}
\put(2,2){\circle*{0.15}}\put(3,2){\circle*{0.15}}
\put(0,2){\line(1,-1){1}}
\put(1,1){\line(-1,-1){1}}
\put(0,1){\line(2,-1){2}}
\put(0,1){\line(1,1){1}}
}
\put(12,0){
\put(-1,0.3){$\alpha$}
\put(-1,1.3){$\beta$}
\put(-0.3,1.9){\vector(0,-1){0.6}}
\put(-0.3,0.9){\vector(0,-1){0.6}}
\put(-0.08,2.2){\tiny 1}
\put(0.9,2.2){\tiny2}
\put(1.9,2.2){\tiny3}
\put(2.9,2.2){\tiny4}
\put(0,0){\circle*{0.15}}\put(1,0){\circle*{0.15}}
\put(2,0){\circle*{0.15}}\put(3,0){\circle*{0.15}}
\put(0,1){\circle*{0.15}}\put(1,1){\circle*{0.15}}
\put(2,1){\circle*{0.15}}\put(3,1){\circle*{0.15}}
\put(0,2){\circle*{0.15}}\put(1,2){\circle*{0.15}}
\put(2,2){\circle*{0.15}}\put(3,2){\circle*{0.15}}
\put(0,2){\line(1,-1){1}}
\put(1,1){\line(-1,-1){1}}
\put(0,1){\line(1,-1){1}}
\put(0,1){\line(2,1){2}}
}
\end{picture} 
$$
In all cases, $\alpha\beta(1)=1$ and $\alpha\beta(2)\not=2$.
Now we proceed with the same inductive construction as in
the proof of Lemma~\ref{lem:alpha_cong_nowhere=beta}, starting
at the step $k=3$, since $\alpha$
has already been conjugated so that $\alpha\beta(1)=1$ and 
$\alpha\beta(2)\not=2$. 
Case 2 of the procedure never occurs, since by assumption
one of $\alpha$ or $\beta$ is fixed point free.
When case 3 occurs, we must conjugate by $(\alpha(k)\ t)$
for some $t\notin S$, with $S$
as in Lemma~\ref{lem:alpha_cong_nowhere=beta}.
Since $|S|\le4$,
provided $n\ge 7$, we can pick $t\not\in\{1,2\}\cup S$.
Then the property $\alpha\beta(1)=1$ will be unaltered by
replacing $\alpha$ by $\alpha^{(\alpha(k)\ t)}$.
\end{proof}

\begin{remark}
\label{rem:Lemma12in_casen=6}
Lemma~\ref{lem:onefixedpoint}
fails when $n=6$ and $\alpha=(1\ 2\ 3)(4\ 5\ 6)$, 
$\beta=(1\ 2)(3\ 4)(5\ 6)$.
Up to conjugation and change of order, this is the only pair
of $\alpha$, $\beta$ in $S_6$ which satisfy the
hypothesis, but fail the conclusion of
Lemma~\ref{lem:onefixedpoint}.   The possible cases
may be checked by hand or computer.
\end{remark}

\begin{lemma}
\label{lem:atleasttwofixed}
Let  $\alpha, \beta\in S_n\setminus \{e\}$ be permutations.
Assume that at least one of  $\alpha$ and $\beta$ is fixed point 
free.  If either (i)
both $\alpha, \beta$ contain a cycle of length at least three,
(ii) 
$\alpha, \beta$ have at least $4$ non-fixed points, 
or (iii) 
both $\alpha, \beta$ contain a transposition,
then
there exists $\sigma\in S_n$ such that
$\alpha^\sigma\beta$ has at least two fixed points.
\end{lemma}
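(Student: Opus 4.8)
The plan is to reduce the statement to finding, in each of the three cases, explicit conjugates $x\sim\alpha$ and $y\sim\beta$ whose product $xy$ fixes at least two points. This reduction is legitimate: by definition $\alpha^{S_n}\beta^{S_n}$ consists exactly of the products $xy$ with $x\sim\alpha$ and $y\sim\beta$; any such $xy$ is conjugate to $\alpha^\sigma\beta$ for a suitable $\sigma$ (exactly as in the proof of Lemma~\ref{interchange}, writing $xy=\alpha^\rho\beta^\tau=(\alpha^{\rho\tau^{-1}}\beta)^\tau$); and the number of fixed points is a conjugacy invariant by Lemma~\ref{lemma1}. Hence I am free to choose $x$ and $y$ in whatever cyclic shape is convenient, and I only need $x(y(i))=i$ for two distinct values of $i$.

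The uniform idea is that forcing $xy$ to fix $i$ amounts to planting in $x$ the reverse of the edge $i\to y(i)$ of $y$, i.e. $x(y(i))=i$; I will install two such matched pairs of edges, arranged to be vertex-disjoint so that the prescribed partial maps are realizable inside the respective conjugacy classes. For case (iii), where both $\alpha,\beta$ contain a transposition, I take $y\sim\beta$ and $x\sim\alpha$ each containing the transposition $(1\ 2)$; then $xy(1)=x(2)=1$ and $xy(2)=x(1)=2$. For case (i), where both contain a cycle of length $\ge 3$ and hence a directed path on three distinct points, I take $y\sim\beta$ with $y(1)=2,\ y(2)=3$ and $x\sim\alpha$ with $x(3)=2,\ x(2)=1$; then $xy(1)=x(2)=1$ and $xy(2)=x(3)=2$.

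Case (ii), where both have at least four non-fixed points, rests on the observation that any permutation with at least four non-fixed points contains two vertex-disjoint directed edges $p\to\pi(p)$ and $p'\to\pi(p')$ with the four points $p,\pi(p),p',\pi(p')$ all distinct: if its non-fixed points meet at least two cycles, take one edge from each; otherwise they form a single cycle of length $\ge 4$, whose first and third vertices start two disjoint edges. Using this for $\beta$ gives $y\sim\beta$ with $y(1)=2,\ y(3)=4$, and using it for $\alpha$ (conjugating its two disjoint edges onto $2\to 1$ and $4\to 3$) gives $x\sim\alpha$ with $x(2)=1,\ x(4)=3$; then $xy(1)=x(2)=1$ and $xy(3)=x(4)=3$.

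In every case $xy$ fixes the two indicated distinct points, so the reduction finishes the argument. The step that requires genuine care---the real content here---is realizability: I must check that the prescribed partial assignment (a $2$-cycle in (iii), a $2$-path in (i), two disjoint edges in (ii)) actually occurs within the given conjugacy class. This comes down to embedding the prescribed functional graph into the cycle structure of $\alpha$ (respectively $\beta$) and then extending the conjugating permutation arbitrarily on the remaining points; the hypotheses of each case are precisely what supply the required sub-structure. I expect no other difficulty, and I note that the standing fixed-point-free assumption is in fact not invoked by any of the three constructions.
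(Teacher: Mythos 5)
Your proposal is correct and matches the paper's own argument: the paper likewise conjugates $\alpha$ and $\beta$ so that prescribed partial actions on $\{1,2,3,4\}$ (a planted inverse transposition, inverse $2$-path, or pair of reversed disjoint edges) force the product to fix two points, presenting the three cases as diagrams rather than formulas. Your explicit justification of the two-sided conjugation reduction and of the realizability of the partial assignments only makes explicit what the paper leaves implicit, and your observation that the fixed-point-free hypothesis is never used also holds for the paper's proof.
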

\begin{proof}
We may assume, after taking conjugates, that
 $\alpha, \beta$ act on $1$, $2$, $3$, $4$ as
in the following diagram, where
lines indicate conditions on the mapping, 
e.g., in all cases $\alpha(1)=2$;
if a line is not given, then no requirement
is made.
$$
\setlength\unitlength{0.5cm}
\begin{picture}(17,3)(-1,0)
\put(-4,2.4){Case (i):}
\put(-1,0.3){$\alpha$}
\put(-1,1.3){$\beta$}
\put(-0.3,1.9){\vector(0,-1){0.6}}
\put(-0.3,0.9){\vector(0,-1){0.6}}
\put(-0.08,2.2){\tiny1}
\put(0.9,2.2){\tiny2}
\put(1.9,2.2){\tiny3}
\put(2.9,2.2){\tiny4}
\put(0,0){\circle*{0.15}}\put(1,0){\circle*{0.15}}
\put(2,0){\circle*{0.15}}\put(3,0){\circle*{0.15}}
\put(0,1){\circle*{0.15}}\put(1,1){\circle*{0.15}}
\put(2,1){\circle*{0.15}}\put(3,1){\circle*{0.15}}
\put(0,2){\circle*{0.15}}\put(1,2){\circle*{0.15}}
\put(2,2){\circle*{0.15}}\put(3,2){\circle*{0.15}}
\put(0,2){\line(1,-1){1}}
\put(1,1){\line(-1,-1){1}}
\put(0,1){\line(2,-1){2}}
\put(0,1){\line(2,1){2}}
\put(8,0){
\put(-4,2.4){Case (ii):}
\put(-1,0.3){$\alpha$}
\put(-1,1.3){$\beta$}
\put(-0.3,1.9){\vector(0,-1){0.6}}
\put(-0.3,0.9){\vector(0,-1){0.6}}
\put(-0.08,2.2){\tiny1}
\put(0.9,2.2){\tiny2}
\put(1.9,2.2){\tiny3}
\put(2.9,2.2){\tiny4}
\put(0,0){\circle*{0.15}}\put(1,0){\circle*{0.15}}
\put(2,0){\circle*{0.15}}\put(3,0){\circle*{0.15}}
\put(0,1){\circle*{0.15}}\put(1,1){\circle*{0.15}}
\put(2,1){\circle*{0.15}}\put(3,1){\circle*{0.15}}
\put(0,2){\circle*{0.15}}\put(1,2){\circle*{0.15}}
\put(2,2){\circle*{0.15}}\put(3,2){\circle*{0.15}}
\put(0,2){\line(1,-1){1}}
\put(1,1){\line(-1,-1){1}}
\put(2,2){\line(1,-1){1}}
\put(3,1){\line(-1,-1){1}}
}
\put(16,0){
\put(-4,2.4){Case (iii):}
\put(-1,0.3){$\alpha$}
\put(-1,1.3){$\beta$}
\put(-0.3,1.9){\vector(0,-1){0.6}}
\put(-0.3,0.9){\vector(0,-1){0.6}}
\put(-0.08,2.2){\tiny 1}
\put(0.9,2.2){\tiny2}
\put(1.9,2.2){\tiny3}
\put(2.9,2.2){\tiny4}
\put(0,0){\circle*{0.15}}\put(1,0){\circle*{0.15}}
\put(2,0){\circle*{0.15}}\put(3,0){\circle*{0.15}}
\put(0,1){\circle*{0.15}}\put(1,1){\circle*{0.15}}
\put(2,1){\circle*{0.15}}\put(3,1){\circle*{0.15}}
\put(0,2){\circle*{0.15}}\put(1,2){\circle*{0.15}}
\put(2,2){\circle*{0.15}}\put(3,2){\circle*{0.15}}
\put(0,2){\line(1,-1){1}}
\put(1,1){\line(-1,-1){1}}
\put(0,1){\line(1,-1){1}}
\put(0,1){\line(1,1){1}}
}
\end{picture} 
$$
In cases (i) and (ii), $1$ and $3$ are fixed,
and in case (iii), $1$ and $2$ are fixed.
%(other points might also be fixed).
\end{proof}

\begin{cora}
\label{cor:caseswheneta=2}
If $n\ge 6$, $\alpha, \beta\in S_n$ and $\eta(\alpha^{S_n}\beta^{S_n})=2$, then up to change
of order of $\alpha$, $\beta$, $\alpha$ is fixed point free, and one
of the following holds:
\begin{enumerate}
\item[(i)]    $\alpha$ contains a cycle of length at least three,
and $\beta$ is a transposition.
\item[(ii)]  $\alpha$ is a product of disjoint transpositions,
and $\beta$ is a three cycle.
\item[(iii)] Both $\alpha$ and $\beta$ are products of
disjoint transpositions.  
\end{enumerate}
\end{cora}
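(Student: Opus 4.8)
The plan is to deduce the cycle structures of $\alpha$ and $\beta$ purely from the numbers of fixed points that the products $\alpha^\sigma\beta$ are forced to realize. First I would invoke Lemma~\ref{lem:fixedpointfree}: since $\eta(\alpha^{S_n}\beta^{S_n})=2\le 2$, at least one of $\alpha,\beta$ is fixed point free, and after interchanging them (allowed by Lemma~\ref{interchange}) I may assume $\alpha$ is. A fixed point free $\alpha$ has exactly $n\ge 6\ge 4$ non-fixed points. Two products of extreme type are always available: by Corollary~\ref{cor:fixedpointfreeproduct} some element of $\alpha^{S_n}\beta^{S_n}$ has $0$ fixed points, and by Lemma~\ref{atleastone} some element has at least one fixed point; since permutations with different numbers of fixed points have different cycle structures and so are non-conjugate by Lemma~\ref{lemma1}, these already account for the two allowed classes. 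The governing principle throughout is that $\eta=2$ forbids the simultaneous existence of a product with exactly one fixed point and a product with at least two fixed points, for together with the fixed point free product these would give three distinct fixed-point counts, hence $\eta\ge 3$.

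The case split is on whether a cycle of length at least three occurs. If neither $\alpha$ nor $\beta$ contains such a cycle, then both are nontrivial products of disjoint transpositions (for $\alpha$ this uses fixed point freeness, for $\beta$ that $\beta\neq e$), which is exactly conclusion (iii). Otherwise at least one of them has a cycle of length $\ge 3$, and here I would apply Lemma~\ref{lem:onefixedpoint} to produce a product with exactly one fixed point. Combined with the principle above, no product can then have two or more fixed points, so by the contrapositive of Lemma~\ref{lem:atleasttwofixed} none of its three hypotheses can hold: $\alpha$ and $\beta$ do not both contain a cycle of length $\ge 3$, do not both have at least four non-fixed points, and do not both contain a transposition.

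I would then read off the structure from these three negations. If $\alpha$ itself has a cycle of length $\ge 3$, the first negation forces $\beta$ to be a product of transpositions, and since $\alpha$ already has $\ge 4$ non-fixed points the second negation forces $\beta$ to have fewer than four non-fixed points; as the non-fixed points of a product of transpositions come in pairs, this leaves a single transposition, giving conclusion (i). If instead the long cycle lies only in $\beta$, then $\alpha$ is a product of transpositions, the third negation forces $\beta$ to contain no transposition, and the second negation again forces $\beta$ to have fewer than four non-fixed points; a cycle of length $\ge 3$ with at most three non-fixed points and no transposition is a single $3$-cycle, giving conclusion (ii).

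The one genuine gap is that Lemma~\ref{lem:onefixedpoint} requires $n\ge 7$. For $n=6$ I would handle the unique exceptional pair of Remark~\ref{rem:Lemma12in_casen=6}, namely $\alpha=(1\ 2\ 3)(4\ 5\ 6)$ and $\beta=(1\ 2)(3\ 4)(5\ 6)$, directly: exhibiting three products lying in distinct conjugacy classes (for instance a $6$-cycle, a permutation of type $\{1,1,4\}$, and a fixed point free product of three transpositions) shows $\eta\ge 3$ for this pair, so it never satisfies the hypothesis $\eta=2$ and may be discarded. I expect this bookkeeping around $n=6$ to be the only delicate point; the remainder is a clean combinatorial deduction from the fixed-point counts supplied by the preceding lemmas.
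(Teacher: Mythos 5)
Your proposal is correct and follows essentially the same route as the paper: reduce to $\alpha$ fixed point free via Lemma~\ref{lem:fixedpointfree} and Lemma~\ref{interchange}, then use Corollary~\ref{cor:fixedpointfreeproduct}, Lemma~\ref{lem:onefixedpoint} (with Remark~\ref{rem:Lemma12in_casen=6} and explicit products for the exceptional $n=6$ pair), and Lemma~\ref{lem:atleasttwofixed} to show that outside cases (i)--(iii) the product contains elements with zero, exactly one, and at least two fixed points, forcing $\eta\ge 3$. Your write-up is in fact somewhat more explicit than the paper's, which merely asserts that the case list follows from these lemmas, whereas you carry out the contrapositive of Lemma~\ref{lem:atleasttwofixed} and the parity count on non-fixed points to pin down $\beta$ as a transposition or $3$-cycle.
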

\begin{proof}
By Lemma~\ref{lem:fixedpointfree}, one of $\alpha$ and 
 $\beta$ is
fixed point free, and by Lemma~\ref{interchange} without loss of
generality, we may assume that $\alpha$ is fixed point free.
By Corollary~\ref{cor:fixedpointfreeproduct} 
and Lemmas~\ref{lem:onefixedpoint}, \ref{lem:atleasttwofixed}
and Remark~\ref{rem:Lemma12in_casen=6},
it follows that unless we are in cases (i), (ii), (iii),
or in case $n=6$ and $\{\alpha^{S_6},\beta^{S_6}\}=\{((1\ 2\ 3)(4\ 5\ 6))^{S_6},
((1\ 2)(3\ 4)(5\ 6))^{S_6}\}$, then
$\alpha^{S_n}\beta^{S_n}$ contains elements with no fixed points,
with exactly one fixed point, and with at least two fixed points.
These are in different conjugacy classes from each other, so the result follows,
except for the case 
$n=6$ and $\{\alpha^{S_6},\beta^{S_6}\}=\{((1\ 2\ 3)(4\ 5\ 6))^{S_6},
((1\ 2)(3\ 4)(5\ 6))^{S_6}\}$.  In the remaining case,
we can explicitly see that $\eta(\alpha^{S_6},\beta^{S_6})\ge3$, since
$(1\ 2)(3\ 4)(5\ 6)(1\ 2\ 3)(4\ 5\ 6)=(2\ 4\ 6\ 3),$
$(1\ 4)(2\ 5)(3\ 6)(1\ 2\ 3)(4\ 5\ 6)
=(1\ 5\ 3\ 4\ 2\ 6)$ and
$(1\ 4)(2\ 6)(3\ 5)(1\ 2\ 3)(4\ 5\ 6)= (1\ 6)(2\ 5)(3\ 4)
$,
and so the result also holds for this case.
\end{proof}

\begin{proof}[Proof of Theorem A]
By Lemma~\ref{lem:fixedpointfree} 
the minimal value of $\eta$ when $\alpha, \beta$
are non trivial is at least $2$, and by Lemma~\ref{producttransposition}, 
it is at most $3$.
Corollary~\ref{cor:caseswheneta=2} 
gives three cases when the minimal value is $2$.

Case (i): $\beta$ is a transposition, and
$\alpha$ is fixed point free and contains a cycle of length at least three.
Note that  $(1\ 2\ \cdots\ r)(1\ 2)=(2)(1\ 3\ \cdots\ r)$
and that for $s>r$,
$(1\ 2\ \cdots\ r)(r+1\ r+2\ \cdots s)(r\ r+1)=
(1\ 2\ \cdots r\ r+2\ r+3\ \cdots s\ r+1)$.
This implies that if $\Type(\alpha)=\{a_1,\cdots,a_k\}$,
then for $1\le i\not=j\le k$,
 $\alpha^{S_n}\beta^{S_n}$ contains elements with
cycle types
$\Type(\alpha)\setminus\{a_i\}\cup\{1,a_i-1\}$
and
$\Type(\alpha)\setminus\{a_i,a_j\}\cup\{a_i+a_j\}$,
(where these are all operations on multisets, not sets). 
If $r>3$, observe that
$(1\ 2\ \cdots\ r)(1\ 3)=(1\ 4\ \cdots\ r)(2 \ 3)$.
Thus $\alpha^{S_n}\beta^{S_n}$ contains  an
element with cycle type
$\Type(\alpha)\setminus\{a_i\}\cup\{a_i-2, 2\}$ if $a_i>3$ 
for some $i$. 
So, if $\eta(\alpha^{S_n}\beta^{S_n})=2$, we must have
that $a_i=a_j=3$ for all $j$ and so we must be in
case (ii) of 
the theorem.

Case (ii): This is the second possibility of case (i) of 
the theorem.

Case (iii): Suppose $\beta$ consists of at least two disjoint
transpositions.  Suppose $\alpha=(1\ 2)\alpha_1$,
and $\beta=(1\ 2)\beta_1$ where $\alpha_1, \beta_1$ fix $1$ and $2$.
As elements of $S_{n-2}$, $\alpha_1$ is fixed point free,
and $\beta_1$ contains a transposition, so by
Corollary~\ref{cor:fixedpointfreeproduct} and
Lemma~\ref{lem:atleasttwofixed} $\alpha_1^{S_{n-1}}\beta_1^{S_{n-1}}$
contain elements with no fixed points, and elements with
at least two fixed points.
Composing these elements with
$(1\ 2)$ gives elements in $\alpha^{S_n}\beta^{S_n}$
with exactly $2$ fixed points, and with at least $4$ fixed points.
On the other hand, by Corollary~\ref{cor:fixedpointfreeproduct},
$\alpha^{S_n}\beta^{S_n}$ contains elements with no fixed points.
Since all of these elements are in different conjugacy classes,
we have $\eta(\alpha^{S_n}\beta^{S_n})\ge 3$.
So, for $\eta=2$, we must be in the first possibility of case (i)
of the theorem.

Finally, it is easy to check that in the cases of the theorem,
we do indeed have $\eta(\alpha^{S_n}\beta^{S_n})=2$.
\end{proof}

\end{section}

\end{document}